% ---------------------------------------------------------------------------
% Author guideline and sample document for EG publication using LaTeX2e input
% D.Fellner, v1.22, Jan 22, 2024

\documentclass[preprint,11pt,lefttitle]{elsarticle}

\nopreprintlinetrue

\usepackage[T1]{fontenc}
\usepackage{ConformalSurfaceSplines}

\usepackage{libertine}
\usepackage[libertine]{newtxmath}

\usepackage[margin=1in]{geometry}
\usepackage{url}
\urlstyle{same}

\usepackage{hyperref}
\hypersetup{
    colorlinks=false,
    pdfborder={0 0 0},
    pdfborderstyle={/S/U/W 0},
}

\usepackage[font=small,labelfont=bf,tableposition=top]{caption}

\title{Conformal Surface Splines}

\begin{document}

\author[1]{Yousuf Soliman\corref{cor1}}
\author[2]{Ulrich Pinkall}
\author[3]{Peter Schr{\"o}der}

\affiliation[1]{organization={Side Effects Software, Inc.},
city={Toronto},
country={Canada}}
\affiliation[2]{organization={Technische Universit\"at Berlin},
city={Berlin},
country={Germany}}
\affiliation[3]{organization={Hausdorff Center for Mathematics, University of Bonn},
city={Bonn},
country={Germany}}

\cortext[cor1]{Corresponding author. Email: \href{mailto:yousufs@sidefx.com}{yousufs@sidefx.com} }

\begin{frontmatter}
\begin{abstract}
      {We introduce a family of boundary conditions and point constraints for conformal immersions that increase the controllability of surfaces defined as minimizers of conformal variational problems. Our free boundary conditions fix the metric on the boundary, up to a global scale, and admit a discretization compatible with discrete conformal equivalence. We also introduce constraints on the conformal scale factor, enforcing rigidity of the geometry in regions of interest, and describe how in the presence of point constraints the conformal class encodes knot points of the spline that can be directly manipulated. To control the tangent planes, we introduce flux constraints balancing the internal material stresses. The collection of these point constraints provide intuitive controls for exploring a subspace of conformal immersions interpolating a fixed set of points in space. We demonstrate the applicability of our framework to geometric modeling, mathematical visualization, and form finding.}
\end{abstract}  
\end{frontmatter}
% ------------------------------------------------------------------------
\section{Introduction}
\label{sec:Introduction}
Conformally constrained Willmore surfaces are a geometric model of bivariate splines that we call \emph{conformal surface splines}. { Recall that univariate elastic splines in \(\RR^3\) are defined as the 
constrained minimizers \(\gamma:[0,L]\to\RR^3\) of the elastic energy 
\[
    \mathcal{E}(\gamma) = \int_{0}^{L}\kappa^2~ds,
\]
where \(\kappa\) is the curvature of the curve and \(ds\) denotes the arc-length element. The most common constraint that is incorporated is an arc-length parameterization constraint; since the elastic energy is diffeomorphism invariant, this effectively only fixes the total length of the curve. Additional constraints on the enclosed volume or total torsion of the curve have also been studied~\cite{Chern:2018:CHF}. Planar elastic curves with a length constraint are a popular mathematical model of physical drafting splines.

Generalizing from the univariate setting, for an immersion \(f : \Sigma\to \RR^3\) of a two-dimensional manifold \(\Sigma\) we take the Willmore energy 
\[ \mathcal{W}(f) = \int_{\Sigma}H^2~dA, \]
where \(H\) is the mean curvature of the immersion, 
to play the role of the elastic energy for curves} and the conformal class to play the role of the length constraint. {The need to relax the isometry constraint in the bivariate setting can be justified by noting that in the smooth category there may not even exist any immersed surface realizing a given metric on \(\Sigma\) (\eg, for compact non-positively curved Riemannian 2-manifolds), whereas smooth conformal immersions into \(\RR^3\) are known to exist for all Riemann surfaces~\cite{Garsia:1961:ICR,Ruedy:1971:EOR}. Just as the metric only fixes the total length of a curve, the conformal class only fixes finitely many geometric parameters of the metric.}

These constraints on the metric, in both the univariate and bivariate settings, ensure that excessive smoothing of the surface geometry is avoided (\secref{ConformalParameters}). In order to be useful for practical applications in surface modeling, one important ingredient is missing from previous work on constrained Willmore surfaces: for a workflow similar to the one used in the context of ordinary splines it is necessary that the location of finitely many points on the surface can be freely prescribed. What is needed is a way to specify the geometry of a surface by interpolating point constraints in space. 

Conformal splines have a number of additional handles that allow the geometry of the surface to be controlled in a delicate and precise way. (1) The metric can be specified by the conformal scale factor at a point, controlling the amount of pressure causing the surface to balloon up predictably. Specifying the metric can also be used to force the splines to preserve geometric features in region of interest. (2) Flux forces that balance the internal material stresses can be prescribed instead of point positions; these forces are subject to a balancing condition that is a consequence of the translational invariance of the discrete energy. {To investigate the influence of these parameters and their utility in the computer graphics applications of surface modeling, fairing and form-finding, we study a discrete formulation of constrained Willmore surfaces and discretize these additional control handles. This enables us to perform a numerical search for constrained Willmore surfaces with special properties for applications in surface modeling---beyond applications in computer graphics, we hope that the discrete theory can also be useful for developing intuition or numerically inspired conjectures about the variational nature of constrained Willmore surfaces.}

\begin{figure}[htpb]
  \centering
  \includegraphics[width = \columnwidth]{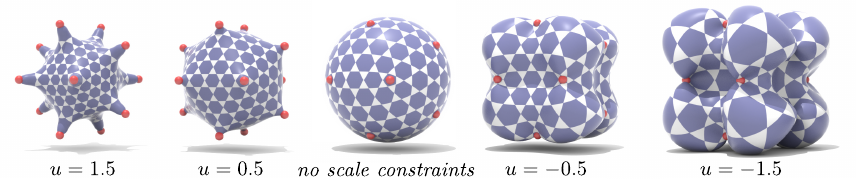}
  \caption{Manipulating the scale factor at the vertices of an icosahedron inscribed in a sphere simultaneously can be used to construct conformal Christmas ornaments.}
  \label{fig:Icosahedron Scale Constraints}
\end{figure}

Boundary conditions also play an important role when working with conformal splines. Existing approaches impose Dirichlet or Neumann boundary conditions, but this is asking too much since it requires the specification of the extrinsic boundary geometry \apriori. On the other hand, some sort of boundary condition is needed for controlling the geometry of the surface well. The boundary condition for conformal immersions used in this paper only fixes the metric on the boundary up to a scale for each connected component. It turns out that this type of boundary condition has many desirable properties for geometric modeling. \figref{StretchShear} illustrates the control afforded by these boundary conditions in ribbon modeling.

\begin{figure}[htpb]
  \centering
  \includegraphics[width = \columnwidth]{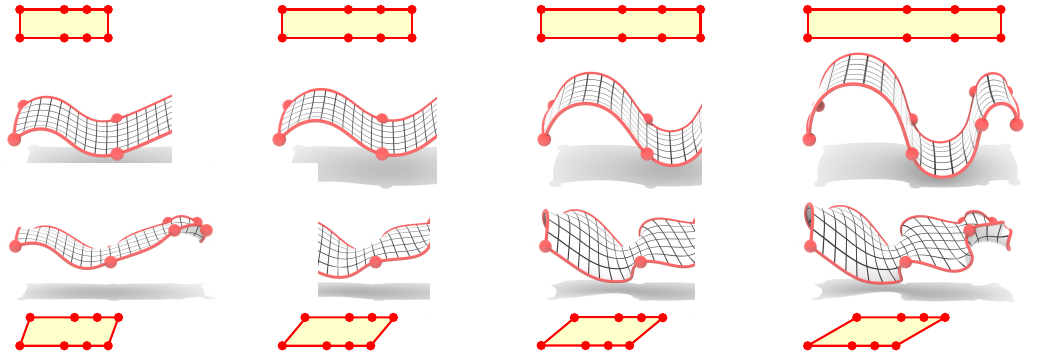}
  \caption{Designing ribbons with conformal surface splines results in a large solution space of interpolating surfaces parameterized by the conformal structure of the immersion punctured at the user-specified point constraints. Fixing the metric on the boundary, up to a scale, provides effective boundary control without explicit specification of the extrinsic geometry. \figloc{Top row: } stretching the conformal type of a rectangular strip results in surfaces that resemble extrusions of planar elastic curves with length constraints. \figloc{Bottom row:} shearing the conformal type produces more complicated deformations.}
  \label{fig:StretchShear}
\end{figure}

\subsection{Related Work}
\label{sec:Related Work}

There is a large body of mathematical research dedicated to the understanding of minimizers of conformal variational problems. Directly related to our work is the study of Willmore and constrained Willmore surfaces. A good summary of the known existence results for minimizers of the Willmore energy under constraints on area and volume can be found in the introduction of \cite{Scharrer:2024:OMW}. The study of constrained Willmore surfaces was initiated in~\cite{Bohle:2008:CWS}, and many transformations and analytic properties of constrained Willmore surfaces are by now well-understood~\cite{Kuwert:2013:MWF,Quintino:2021:CWS}. Although closely related, we are not aware of any mathematical results on the existence of minimizers of the Willmore energy under conformality, point constraints and scale constraints, although the existing results suggest that under fairly general conditions existence of solutions of the variational problem can be expected. 

{In differential geometry the theory of isometric immersions studies the question to what extent the metric of a surface in space can be prescribed. The approaches in~\cite{Chern:2019:FCI,Chern:2018:SfM} aim to compute isometric immersions with a Willmore energy type regularization. In the smooth setting, the Willmore energy is invariant under M{\"o}bius transformations. We preserve this feature by using the M{\"o}bius invariant discretization given in terms of the circumcircle intersection angles~\cite{Bobenko:2005:DWF}.

\paragraph*{Splines}
Due to their ability to smoothly interpolate or approximate function values, splines have been applied in many areas of computer graphics. They are defined by minimizing a smoothness energy subject to interpolation constraints at specified knot points. In many situations, such as for cubic splines, they admit a finite-dimensional solution space and can be represented by piecewise polynomial functions. Univariate elastic splines obtained by minimizing the elastic energy subject to arclength constraints describe a fully nonlinear generalization of cubic splines. Our conformal surface splines can be understood as the natural two-dimensional generalization of these elastic splines: the Willmore energy substitutes the elastic energy and the finite-dimensionality is manifest through conformality. 

Biharmonic smoothing energies (also used for thin plate splines) can be interpreted as the linearization of the Willmore energy and have found a variety of applications in geometry processing~\cite{Jacobson:2011:BBW}. Natural boundary conditions for the biharmonic problem were derived from a variational principle measuring the squared \(L^2\)-norm of the Hessian~\cite{Stein:2018:NBC} and also provide desirable control over the boundary geometry. 

\paragraph*{Conformal Geometry Processing}
Conformal parameterizations of surfaces have found numerous applications in computer graphics. Their most prominent application is for computing texture maps~\cite{Levy:2002:LSC,Mullen:2008:SCP,Springborn:2008:CETM,Sawhney:2017:BFF}, but they also have applications in other areas of geometry processing. One important application includes the registration of signals on surfaces~\cite{Yoshiyasu:2014:ACAP,Baden:2018:MR}. The Yamabe equation, determining the curvature of a conformally rescaled metric, was also used to formulate variational problems for computing optimal cuts and cone singularities on smooth surfaces~\cite{Sharp:2018:VSC,Soliman:2018:OCS,Campen:2019:SPA}. {In the two-dimensional setting, conformal deformations are also used to define and deform domains~\cite{Weber:2010:CCM}---an application not too dissimilar to spline modeling.}  Conformal equivalence has also been extensively studied in discrete differential geometry, where the discretization based on cross ratios preserves much of the structure of conformal maps that is found in the smooth setting~\cite{Springborn:2008:CETM}. The computation of discrete uniformization, in the sense of discrete conformal equivalence, was recently described in \cite{Gillespie:2021:CEPS}. The uniformization domain of the surface defines a Riemann surface and a particular point in Teichm{\"u}ller space, the space of conformal equivalence classes of metrics up to diffeomorphism. Some initial studies on the applications of Teichm{\"u}ller theory to computer graphics were presented in~\cite{Jin:2009:CFNC}. Fenchel-Nielsen coordinates are a standard way to parameterize this space~\cite{Fenchel:2011:DGI}, and they are described relative to a pair of pants decomposition of the surface. An approach to compute pants decompositions based on discrete Morse theory is presented in \cite{Hajij:2016:SSM}.

Although conformal geometry has been at the forefront of geometry processing for the past fifteen years, only a handful of papers consider the interaction between the extrinsic geometry of how a surface is immersed in \(\RR^3\) and the intrinsic conformal geometry of its induced metric.
Discrete spin transformations were introduced in \cite{Crane:2011:STD}, and they parameterize extrinsic conformal deformations in terms of changes in mean curvature half density. They were then used in \cite{Crane:2013:RFC} to compute conformal curvature flows for the purpose of conformal surface fairing. An approach to compute deformations that preserve length cross-ratios was presented in \cite{Vaxman:2015:CMD}, and a M{\"o}bius geometric subdivision scheme was presented in \cite{Vaxman:2018:CMS} that presents an alternative approach to extrinsic conformal geometric modeling. {Most recently, \cite{Corman:2024:CDC} consider a reformulation of the spin transformations that instead works with variations of the shape operator (as opposed to variations of just its trace) and compute curvature driven conformal transformations.} 

\section{Preliminaries}
\label{sec:Preliminaries}
{In this section we will review the notions from smooth and discrete differential geometry needed to define the variational problem that gives rise to constrained Willmore surfaces.}
Throughout \(\Msf = (\Vsf,\Esf,\Fsf)\) will denote a simplicial surface with vertex set \(\Vsf\), unoriented edge set \(\Esf\), and face set \(\Fsf\). We denote the set of oriented edges, or halfedges, by \(\Hsf\). The set of interior vertices, edges and halfedges are denoted \(\Vsf_0\), \(\Esf_0\) and \(\Hsf_0\), respectively. The dual mesh will be denoted by \(\Msf^*\), and we will make the natural identifications \(\Vsf^*\cong\Fsf\), \(\Esf^*\cong\Esf\), and \(\Fsf^*\cong\Vsf\) without further comment.
We will use discrete exterior calculus, denoting the spaces of discrete differential \(k\)-forms, for \(k=0,1,2\), on the primal and dual meshes as \(\Omega^k(\Msf)\) and \(\Omega^k(\Msf^*)\), respectively. Discrete differential forms can be identified with (co-)chains, and the corresponding primal-dual pairing on discrete differential forms will be denoted \(\langle\!\langle \cdot\mid\cdot\rangle\!\rangle\), and the discrete exterior derivative on both primal and dual forms will be denoted \(\dsf\).

\subsection{Discrete Conformal Equivalence}
\label{sec:DiscreteConformalEquivalence}
Consider a smooth surface \(\Sigma\) with a Riemannian metric \(g\). We say that another metric \(\tilde{g}\) is \emph{pointwise conformally equivalent} to \(g\) if there exists a smooth function \(u\colon \Sigma\to\RR\) satisfying \[ \tilde{g} = e^{2u}g. \] Since the inner products are related by a positive scaling, the angles between tangent vectors are preserved. A smooth immersion \(f \colon \Sigma \to\RR^3\) becomes a Riemannian manifold with the pullback metric \(g_{f}  \coloneqq f^*\langle\cdot,\cdot\rangle_{\RR^3} = \langle df,\,df\rangle_{\RR^3}\), and the conformal class of the pullback metric makes \(\Sigma\) a Riemann surface. In the context of geometric variational problems, a conformal constraint amounts to asking that the metric induced by the immersion lies in a prescribed pointwise conformal equivalence class of metrics.
\begin{definition}
  The space of pointwise conformal equivalence classes of Riemannian metrics is denoted 
  \[ 
    \Conf(\Sigma) \coloneqq \{ \text{Riemannian metrics }g\} / \sim
  \]
  where \(g_1\sim g_2\) if the two metrics are pointwise conformally equivalent.
\end{definition}

On the simplicial surface \(\Msf\) a discrete metric is the assignment of edge lengths \(\ell:\Esf\to\RR_{> 0}\) satisfying the triangle inequality on each face---this is equivalent to specifying a Euclidean structure on each triangle. We use the notion of discrete conformal equivalence obtained by rescaling the metric by vertex scale factors~\cite{Luo:2004:CYF,Bobenko:2015:DCM}. 
\begin{definition}
  The space of discrete conformal equivalence classes is 
  \[ 
    \Conf(\Msf) \coloneqq \{ \ell:\Esf\to\RR_{> 0}\} / \sim
  \]
  where \(\ell\sim\tilde{\ell}\) if there exists a function \(\usf:\Vsf\to\RR\) so that for each edge \(\eij\in\Esf\) the metrics are related by \(\tilde{\ell}_{\eij} = \exp\big(\tfrac12(\usf_i + \usf_j)\big)\ell_{\eij}.\)
\end{definition}
Using the logarithmic edge lengths, the relationship between discrete conformal equivalent metrics becomes linear. Define the averaging operator \begin{equation}\Asf:\RR^{|\Vsf|}\to\RR^{|\Esf|},\qquad (\Asf \usf)_{ij} = \tfrac{\usf_i + \usf_j}{2}.\label{eq:averaging_operator}\end{equation} With it, we can equivalently say that two discrete metrics \(\ell,\tilde{\ell}\) on \(\Msf\) are discrete conformal equivalent if there exists some \(\usf\in\Omega^0(\Msf)\) so that \(\log\ell - \log\tilde{\ell} = \Asf\usf\).~\\
\setlength{\columnsep}{1em}
\setlength{\intextsep}{0em}
\begin{wrapfigure}{l}{50pt}
  \includegraphics{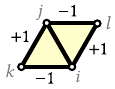}
\end{wrapfigure}
Discrete metrics are conformally equivalent if and only if they have the same length cross ratios on all interior edges~\cite[Proposition 2.3.2]{Bobenko:2015:DCM}. The length cross ratio of an edge \(\eij\in\Esf_0\) incident to oriented faces \(\eijk\) and \(\jil\) is
\begin{equation}
  \operatorname{lcr}_{\eij} = \frac{\ell_{il}\ell_{jk}}{\ell_{lj}\ell_{ki}}.
\end{equation}
{Thus, the logarithmic length cross ratio is a linear function of the logarithmic edge lengths. Define the linear map \(\Csf:\RR^{|\Esf|}\to\RR^{|\Esf_0|}\) by 
\begin{equation}
  \lambda\in\RR^{|\Esf|}\mapsto (\Csf\lambda)_{\eij} = \lambda_{\il} - \lambda_{\lj} + \lambda_{\jk} - \lambda_{ki}
  \label{eq:cross_ratio_operator}
\end{equation}
for each \(\eij\in\Esf_0\) so that \(\log\lcr_{\eij} = (\Csf\log\ell)_{\eij}\).}
\setlength{\intextsep}{1em}

\subsection{Discrete Willmore Energy}
\label{sec:Willmore Energy}
{The Willmore energy of a smooth immersion \(f : \Sigma\to \RR^3\) is invariant under conformal transformations of the ambient space. Up to the integral of the Gaussian curvature, which is a topological constant that we will now include in the definition of \(\mathcal{W}\), the Willmore energy can be written as the integral of a M{\"o}bius invariant integrand 
\[ \mathcal{W}(f) = \int_{\Sigma}(H^2-K)~dA. \] }
~
\setlength{\columnsep}{1em}
\setlength{\intextsep}{0.2em}
\begin{wrapfigure}{r}{80pt}
  \includegraphics{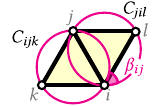}
\end{wrapfigure}
{For an assignment of vertex positions \(\fsf:\Vsf\to\RR^3\) we will use the M{\"o}bius invariant discretization of the Willmore energy introduced by Bobenko~\cite{Bobenko:2005:DWF}.} For each oriented triangle \(\eijk\in\Fsf\) we let \(C_{\eijk}\) denote the circumcircle going through the points \(\fsf_i,\fsf_j,\fsf_k\) in order. For each interior edge \(\eij\in\Esf\), adjacent to faces \(\eijk\) and \(\jil\), define \(\beta_{\eij}\) to be the intersection angle between the circumcircles \(C_{\eijk}\) and \(C_{\jil}\). 
\setlength{\intextsep}{1em}

\begin{definition}
  The Willmore integrand \(W\in\Omega^2(\Msf^*;\RR)\) is the dual 2-form, vanishing on the boundary, defined for each interior vertex \(i\in\Vsf_0\) as
  \[
    W_i \coloneqq \sum_{\eij}\beta_{\eij} - 2\pi.
  \]    
  The discrete Willmore energy is the integral of \(W\) over \(\Msf\):
  \[ \Willmore \coloneqq \sum_{i\in\Vsf}W_i. \]
\end{definition}

\subsection{Discrete Conformal Variational Problems}
\label{sec:ConformalVariationalProblems}
{ A conformal variational problem is defined by a functional \(\mathcal{F}(f)\) on the space of immersions \(f:\Sigma\to\RR^3\) and by a reference metric \(g_0\), defining the prescribed pointwise conformal equivalence class. The question is to find solutions of the optimization problem \[ \min_{f:\Sigma\to\RR^3}\mathcal{F}(f)\quad \text{subject to}\quad [g_f] = [g_0] \text{ in }\Conf(\Sigma).\]
Minimization of the Willmore energy in a fixed conformal class is the prototypical example of a conformal variational problem. The Euler-Lagrange equations for conformal variational problems were derived by Bohle~\etal~\cite{Bohle:2008:CWS}:
\begin{theorem}[\cite{Bohle:2008:CWS}]
  A smooth conformal immersion \(f:\Sigma\to\RR^3\) of a compact Riemann surface \(\Sigma\) is a critical point of \(\mathcal{F}\) under all infinitesimal conformal variations if and only if there exists a holomorphic quadratic differential \(q\in H^0(K^2)\) so that 
  \[ \grad\mathcal{F}(f) = \delta^*(q).\] Here, \(\grad\mathcal{F}(f)\in\Omega^2(\Sigma)\) is the gradient 2-form of \(\mathcal{F}\) defined so that 
  \[
    \mathring{\mathcal{F}}(f) = \int_{\Sigma} \mathring\phi\grad\mathcal{F}(f)
  \]
  for all infinitesimal normal variations \(\mathring{f} = \mathring{\phi}n:\Sigma\to\RR\), and \(\delta^*:H^0(K^2)\to \Omega^2(\Sigma)\) is the adjoint of the map taking infinitesimal normal variations to infinitesimal variations of the almost complex structure~\cite[Eqn. 2.8]{Bohle:2008:CWS}.
\end{theorem}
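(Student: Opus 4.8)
The plan is to prove this as a Lagrange-multiplier theorem, the only twist being that the multiplier is forced into the finite-dimensional space $H^0(K^2)$. I read an \emph{infinitesimal conformal variation} of $f$ as a variation $\mathring f$ for which the family of conformal classes $[g_{f_t}]$ is stationary in $\Conf(\Sigma)$ at $t=0$. The argument then has three steps: (i) reduce to normal variations; (ii) linearize the conformality constraint and identify the admissible normal variations as a linear subspace; (iii) turn ``critical point'' into an orthogonality relation and close with finite-dimensionality.

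For step (i), split $\mathring f = \mathring\phi\,n + X$ into normal and tangential parts. Since $\mathcal F$ is invariant under reparametrizations, its first variation annihilates tangential directions --- which is exactly what makes $\grad\mathcal F(f)$ well defined via $\mathring{\mathcal F}(f) = \int_\Sigma\mathring\phi\,\grad\mathcal F(f)$ --- so $f$ is critical under conformal variations if and only if $\int_\Sigma\mathring\phi\,\grad\mathcal F(f) = 0$ for every function $\mathring\phi$ arising as the normal part of an infinitesimal conformal variation. For step (ii), the induced metric varies by $\dot g_f = -2\mathring\phi\,A + \mathcal L_X g_f$, with $A$ the second fundamental form, so the variation of the conformal class --- the trace-free part of $\dot g_f$ --- vanishes precisely when $(\mathcal L_X g_f)_0 = 2\mathring\phi\,\mathring A$. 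Identifying trace-free symmetric $2$-tensors with quadratic differentials through the metric, $2\mathring\phi\,\mathring A$ is, up to a universal constant, the variation $\delta(\mathring\phi)$ of the almost complex structure appearing in the statement, while $X \mapsto (\mathcal L_X g_f)_0$ is the conformal Killing operator $\kappa$, i.e.\ $\bar\partial$ acting on vector fields in the holomorphic picture. Hence $\mathring\phi$ is an admissible normal variation if and only if $\delta(\mathring\phi) \in \operatorname{im}\kappa$.

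The analytic heart is the description of $\operatorname{im}\kappa$. On the compact surface $\Sigma$, $\kappa$ is an elliptic operator, so it has closed range and finite-dimensional cokernel, and its $L^2$-cokernel is exactly the space of holomorphic quadratic differentials (Hodge theory of the conformal Killing operator, or Serre duality $H^1(\Sigma, T\Sigma) \cong H^0(K^2)^*$ with the pairing $\int_\Sigma$). Therefore $\delta(\mathring\phi) \in \operatorname{im}\kappa$ if and only if $\int_\Sigma\langle\delta(\mathring\phi), q\rangle = 0$ for every $q \in H^0(K^2)$, which by the definition of the adjoint $\delta^*$ is the same as $\int_\Sigma\mathring\phi\,\delta^*(q) = 0$ for every $q \in H^0(K^2)$. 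So the admissible normal variations are precisely the annihilator of the subspace $S \coloneqq \{\delta^*(q) : q \in H^0(K^2)\} \subset \Omega^2(\Sigma)$. Criticality of $f$ now reads $\grad\mathcal F(f) \in S^{\perp\perp}$; but $H^0(K^2)$ is finite-dimensional by Riemann-Roch, hence so is $S$, a finite-dimensional subspace is closed, $S^{\perp\perp} = S$, and we conclude $\grad\mathcal F(f) = \delta^*(q)$ for some $q \in H^0(K^2)$. The converse is immediate: if $\grad\mathcal F(f) = \delta^*(q)$ then along any infinitesimal conformal variation $\mathring{\mathcal F}(f) = \int_\Sigma\mathring\phi\,\delta^*(q) = \int_\Sigma\langle\delta(\mathring\phi), q\rangle = 0$ by the orthogonality just established.

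I expect the main obstacle to be the functional-analytic bookkeeping that makes the above rigorous: choosing Sobolev or H\"older completions in which $\kappa$, and with it the linearized constraint, genuinely has closed range with the stated cokernel, and in which the first-variation formula for $\mathcal F$ holds and kills tangential directions (the diffeomorphism-invariance input). Of the geometric ingredients, the identification $\operatorname{coker}\kappa \cong H^0(K^2)$ --- equivalently, that the transverse-traceless symmetric $2$-tensors on a Riemann surface are exactly the real parts of holomorphic quadratic differentials --- carries the real content, but it is classical Teichm\"uller theory; once that and the closed-range property are in hand, the remaining double-annihilator step is purely formal.
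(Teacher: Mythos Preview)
The paper does not give a proof of this theorem; it is stated as background and attributed to Bohle--Peters--Pinkall, with the proof deferred to that reference. So there is no ``paper's own proof'' to compare against. Your outline is the standard argument and is essentially the one in the cited paper: use diffeomorphism invariance to reduce to normal variations, linearize the conformality constraint to $\delta(\mathring\phi)\in\operatorname{im}\kappa$ with $\kappa=\bar\partial$ on vector fields, invoke $\operatorname{coker}\kappa\cong H^0(K^2)$ (Serre duality / Teichm\"uller theory), and close with the finite-dimensional Lagrange-multiplier step. The ingredients you flag as the real content---closed range of the conformal Killing operator and the identification of its cokernel with holomorphic quadratic differentials---are exactly the ones Bohle \emph{et al.}\ rely on; at the level of ideas your sketch is complete and correct.
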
 
}

{A discrete conformal variational problem asks for the minimizer of an functional \(\mathcal{F}(\fsf)\) of the vertex positions \(\fsf:\Vsf\to\RR^3\) subject to the constraint that the induced metric lies in a fixed discrete conformal class described by logarithmic length cross ratios \(\xi_0:\Esf_0\to\RR\)~\cite{Soliman:2021:CWS}:
\begin{equation*}
  \min_{\fsf}\mathcal{F}(\fsf)\quad \text{subject to}\quad \Csf\log\ell(\fsf) = \xi_0.
\end{equation*} 
The first order optimality conditions for such a problem are described by discrete quadratic differentials: assignments \(q:\Esf\to\RR\) satisfying for each interior vertex \(i\in\Vsf_0\) \[\sum_{\eij}q_{\eij} = 0.\]
For each discrete quadratic differential there is an associated extrinsic conformal stress \(\mu\in\Omega^1(\Msf^*;\RR^3)\) defined on each dual edge \(\eij\in\Esf\) by \begin{equation} \mu_{\eij} \coloneqq -q_{\eij} \frac{\dsf\fsf_{\eij}}{|\dsf\fsf_{\eij}|^2}.\label{eq:extrinsicQD} \end{equation} We denote the space of discrete quadratic differentials on \(\Msf\) by \(\QD(\Msf)\). We recall the discrete Euler-Lagrange equations:
\begin{theorem}[\cite{Soliman:2021:CWS}]
  A discrete conformal map \(\fsf:\Vsf\to\RR^3\) is a critical point of \(\mathcal{F}\) under all infinitesimal discrete conformal variations if and only if there exists a discrete quadratic differential \(q\in\QD(\Msf)\) satisfying \(\sum_{\eij}q_{\eij} = 0\) for each boundary vertex \(i\in\Vsf\setminus\Vsf_0\) so that \[ \grad\mathcal{F}(\fsf) = \dsf\mu \] where \(\mu\) is the extrinsic conformal stress defined by \(q\). Here, \(\grad\mathcal{F}(\fsf)\) is the gradient 2-form of \(\mathcal{F}\) defined so that 
  \[
    \mathring{\mathcal{F}}(\fsf) = \sum_{i\in\Vsf}\langle \grad\mathcal{F}(f)_i, \mathring{\fsf}_i\rangle
  \]
  for all infinitesimal variations \(\mathring{\fsf}:\Vsf\to\RR^3\).
  \label{thm:ConformalEL}
\end{theorem}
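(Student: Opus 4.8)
The plan is to read Theorem~\ref{thm:ConformalEL} as a finite-dimensional constrained-critical-point statement. View \(\mathcal F\) as a differentiable function of the vertex positions \(\fsf\in(\RR^3)^{|\Vsf|}\) and encode the conformal constraint by \(\Phi(\fsf)\coloneqq\Csf\log\ell(\fsf)-\xi_0\in\RR^{|\Esf_0|}\), so that an infinitesimal discrete conformal variation at \(\fsf\) is exactly a \(\mathring\fsf\) with \(D\Phi(\fsf)\,\mathring\fsf=\Csf\,D\log\ell(\fsf)\,\mathring\fsf=0\). By the elementary duality \(\{g:\langle g,x\rangle=0\text{ whenever }Ax=0\}=\operatorname{im}(A^{*})\), the immersion \(\fsf\) is critical under all such variations if and only if \(\grad\mathcal F(\fsf)\in\operatorname{im}\!\big((\Csf\,D\log\ell(\fsf))^{*}\big)=\operatorname{im}\!\big(D\log\ell(\fsf)^{*}\Csf^{*}\big)\), i.e.\ if and only if there is a Lagrange multiplier \(\eta\in\RR^{|\Esf_0|}\) with \(\grad\mathcal F(\fsf)=D\log\ell(\fsf)^{*}\Csf^{*}\eta\). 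No constraint qualification is needed since we only test against linearized variations.

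Set \(q\coloneqq\Csf^{*}\eta\in\RR^{|\Esf|}\). Substituting \eqref{eq:averaging_operator} into \eqref{eq:cross_ratio_operator}, the four averages cancel pairwise, so \(\Csf\Asf=0\); hence \(\Asf^{*}q=(\Csf\Asf)^{*}\eta=0\), which is exactly the statement that \(\sum_{\eij}q_{\eij}=0\) at every vertex \(i\in\Vsf\) --- the defining condition of \(\QD(\Msf)\) at interior vertices together with the extra vanishing at boundary vertices. Conversely, the cited characterization of discrete conformal equivalence (equal length cross ratios on interior edges \(\iff\) conformally equivalent) is the identity \(\ker\Csf=\operatorname{im}\Asf\) at the linear level, whence \(\operatorname{im}\Csf^{*}=(\ker\Csf)^{\perp}=(\operatorname{im}\Asf)^{\perp}=\ker\Asf^{*}=\{q:\textstyle\sum_{\eij}q_{\eij}=0\ \forall\,i\in\Vsf\}\). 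Thus the admissible multipliers \(\Csf^{*}\eta\) are precisely the \(q\in\QD(\Msf)\) that also sum to zero around each boundary vertex.

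It remains to see that \(D\log\ell(\fsf)^{*}q=\dsf\mu\) for \(\mu\) the extrinsic conformal stress of \(q\) from \eqref{eq:extrinsicQD}. Since \(\log\ell_{\eij}=\log|\dsf\fsf_{\eij}|\), the chain rule gives \((D\log\ell(\fsf)\,\mathring\fsf)_{\eij}=\langle\dsf\fsf_{\eij},\dsf\mathring\fsf_{\eij}\rangle/|\dsf\fsf_{\eij}|^{2}\), hence for every \(\mathring\fsf\)
\[
  \langle D\log\ell(\fsf)^{*}q,\ \mathring\fsf\rangle
  =\sum_{\eij\in\Esf}q_{\eij}\,\frac{\langle\dsf\fsf_{\eij},\dsf\mathring\fsf_{\eij}\rangle}{|\dsf\fsf_{\eij}|^{2}}
  =-\sum_{\eij\in\Esf}\langle\mu_{\eij},\dsf\mathring\fsf_{\eij}\rangle
  =-\langle\!\langle\mu\mid\dsf\mathring\fsf\rangle\!\rangle .
\]
Regrouping this edge sum by its two endpoints --- discrete summation by parts --- rewrites it as \(\langle\!\langle\dsf\mu\mid\mathring\fsf\rangle\!\rangle=\sum_{i\in\Vsf}\langle(\dsf\mu)_i,\mathring\fsf_i\rangle\); every oriented edge is charged to both of its endpoints, so no mesh-boundary term is left over, and since \(\mathring\fsf\) is arbitrary we conclude \(D\log\ell(\fsf)^{*}q=\dsf\mu\) as dual \(2\)-forms. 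Chaining the three steps gives the theorem: \(\fsf\) is critical \(\iff\grad\mathcal F(\fsf)=D\log\ell(\fsf)^{*}q\) for some admissible \(q\iff\grad\mathcal F(\fsf)=\dsf\mu\) for \(\mu\) the stress of such a \(q\).

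I expect the two places that need care to be: (i) the sign and orientation conventions in the primal--dual pairing and in the summation by parts, which must be chosen so that \(D\log\ell(\fsf)^{*}q=\dsf\mu\) holds over all of \(\Vsf\) without a stray boundary term --- this is exactly why the right partner to ``\(q\in\QD(\Msf)\)'' is vanishing of \(q\) summed around each boundary vertex; and (ii) invoking the nontrivial inclusion \(\ker\Csf\subseteq\operatorname{im}\Asf\) from the theory of discrete conformal equivalence, which is what promotes ``a multiplier \(\eta\) living on interior edges'' to ``a discrete quadratic differential \(q\) on all of \(\Esf\) with the stated vertex conditions''.
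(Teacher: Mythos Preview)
The paper does not prove this theorem; it is quoted verbatim from \cite{Soliman:2021:CWS} (``We recall the discrete Euler--Lagrange equations''), so there is no in-paper argument to compare against. Your Lagrange-multiplier derivation is correct and is the standard proof: identifying infinitesimal conformal variations with \(\ker(\Csf\,D\log\ell)\), invoking \((\ker A)^{\perp}=\operatorname{im}A^{*}\) in finite dimensions, using \(\ker\Csf=\operatorname{im}\Asf\) (the length-cross-ratio characterization of discrete conformal equivalence, \cite{Bobenko:2015:DCM}) to get \(\operatorname{im}\Csf^{*}=\ker\Asf^{*}\), and finally the summation-by-parts identity \(D\log\ell(\fsf)^{*}q=\dsf\mu\), which matches the paper's own computation in the proof of Proposition~\ref{prp:WillmoreFlux}. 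The two caveats you flag are the right ones, and both are handled: the sign convention agrees with the one used in \S\ref{sec:WillmoreConservationLaws}, and the inclusion \(\ker\Csf\subseteq\operatorname{im}\Asf\) is exactly what promotes \(\Csf^{*}\eta\) to the full class of \(q\in\QD(\Msf)\) with vanishing boundary sums, giving both directions of the ``if and only if''.
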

}
\section{Conformal Constraints}
\label{sec:Constraints}
The inclusion of user-specified constraints plays a prominent role in geometric modeling. In addition to constraints on point positions, area, and enclosed volume there are a number of additional constraints that are only applicable to conformal surface splines.

\subsection{Conformal Knot Points}
\label{sec:ConformalParameters}
{The first handle we will consider are the conformal knot points---these are what we will call the additional conformal moduli needed when specifying a point in the Teichm{\"u}ller space of a punctured surface. To motivate their importance, consider the length constraint for univariate splines.}
More specifically, consider minimizing a geometric bending energy over curves \(\gamma : [0,L]\to\RR^3\) subject to some boundary conditions along with an additional constraint that \(\gamma(s_0) = \gamma_{0}\) for some fixed \(s_0\in[0,L]\) and \(\gamma_0\in\RR^3\). 
Under no additional constraint on the parameterization of the curve, the variational problem remains unchanged if we change the point constraint to \(\gamma(s_1) = \gamma_{0}\) for any other \(s_1\in [0,L]\). 
This is because we can always replace a curve \(\gamma:[0,L]\to \RR^3\) with \(\gamma\circ\Phi:[0,L]\to\RR^3\) where \(\Phi:[0,L]\to [0,L]\) is some orientation preserving diffeomorphism of the interval (\ie, a monotone function) satisfying \(\Phi(s_0) = s_1\). 
More control over an elastic curve spline can be achieved by imposing that the curve is arc length parameterized (or alternatively, that it has some fixed Riemannian metric). This opens up the possibility to edit both the position of the point constraints as well as the length of the curve between successive constraint points. 

Analogously, in the two-dimensional setting the particular choice of constraint points in the parameter domain has no influence on geometric variational problems formulated over the set of surfaces \(f \colon \Sigma\to\RR^3\) constrained to satisfy \(f(p_i) = f_i\) for a collection of finite points \(p_i\in \Sigma\) and \(f_i\in\RR^3\). 
The reason is that a diffeomorphism can be used to maneuver the original constraint points \(p_i\) to an arbitrary configuration~\cite{Michor:1994:nTC}---that is, for every \(n\in\mathbb{N}\) the action of the diffeomorphism group on points is \(n\)-transitive. To control this diffeomorphism degree of freedom that allows any collection of points in the parameter domain to be exchanged with any other, one might attempt to fix the metric exactly as in the univariate setting. This approach is physically motivated, in elasticity thin plate bending can be modeled by bending energy minimizers subject to isometric constraints~\cite{Friesecke:2002:TGR,Bartels:2013:FEM}. Although physically motivated, isometric bending models require the specification of infinitely many parameters (the metric) making it a challenging formulation for two-dimensional splines. The relaxation to a conformal constraint on the parameterization solves this problem: it lies in-between the flexibility admitted by no constraint on the parameterization and the rigidity imposed by isometric deformations, and it is specified by a finite number of parameters. When one considers the conformal class in the presence of point constraints the additional conformal parameters can be interpreted as knot values for the resulting spline.

\begin{figure}[ht]
  \centering
  \includegraphics[width=\columnwidth]{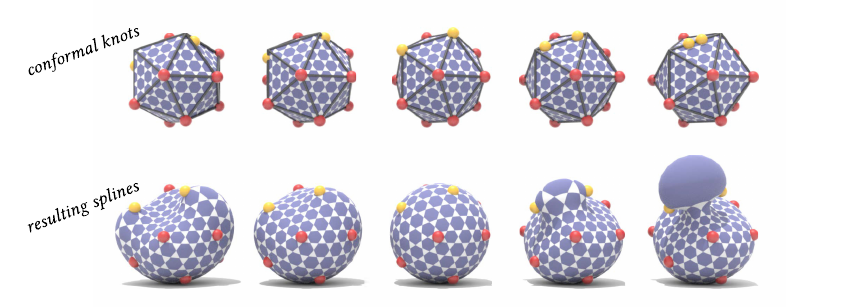}
  \caption{The additional conformal parameters that arise when specifying a surface with point constraints correspond to their positions in the parameter domain. Pulling these conformal knots together in \(M\) results in a pulling more material between the points extrinsically. \label{fig:ConformalKnots}}
\end{figure}

A precise description of these conformal parameters is given by Riemann's moduli space of conformal classes. Recall that we can pull back a Riemannian metric \(g\) by a diffeomorphism \(\Phi\in\Diff(\Sigma)\) to obtain a new metric \(\Phi^*g\).
\begin{definition}
The \textbf{moduli space} of conformal classes on \(\Sigma\) is the quotient of \(\Conf(M)\) by diffeomorphism: \[\Moduli(\Sigma) \coloneqq \Conf(\Sigma)/\Diff(\Sigma).\]
\end{definition}
In the discrete setting, the action by diffeomorphism can be thought of as an intrinsic remeshing of the surface. Since geometric functionals are invariant under diffeomorphism a constraint on \(\Conf(\Sigma)\) can be replaced by a constraint on \(\Moduli(\Sigma)\) without changing the variational problem. 
The reason we are interested in this quotient space is that because, unlike \(\Conf(\Sigma)\) which is infinite-dimensional, the moduli space \(\Moduli(\Sigma)\) of a genus \(g\) surface is a \(6g-6\) dimensional space~\cite{Thurston:1997:GT3}. To specify a point and navigate in this space, one can use Fenchel-Nielsen coordinates, but we do not pursue how to navigate this space by modification of the length cross ratios further. Instead, we focus on navigating the moduli space in the presence of point constraints which necessitates working with the moduli space of a punctured surface. 

When the positions of points are also prescribed, \(f(p_i) = f_i\in\RR^3\) for \(i=1,\dots,n\), not all diffeomorphisms will preserve this constraint. Therefore, we cannot replace the pointwise conformal constraint with a constraint on \(\Moduli(\Sigma)\). Instead, we can replace it by a constraint on the moduli space of the surface punctured at the constraint points \(\Sigma^* \coloneqq \Sigma\setminus\{p_1,\dots,p_n\}\). The space \(\Moduli(\Sigma^*)\) is a fiber bundle over \(\Moduli(\Sigma)\) with fiber the space of configurations of \(n\) labeled points on \(\Sigma\)~\cite{Farb:2011:PMC}. Consequentially, specifying a conformal structure on the punctured surface is the same thing as prescribing the conformal structure of \(\Sigma\) along with the positions of \(n\) points in \(\Sigma\)---we call these points the \emph{conformal knot points} of the spline. In \figref{ConformalKnots} we visualize the deformation obtained by pulling the conformal knot points together and apart, while keeping their positions in space fixed.

\subsection{Total Torsion}
{The conformal type also influences the extrinsic geometry. Below we explain its influence for thin tubes around space curves, where it ensures that they remain approximately a constant thickness when restricted to the conformal class. 
This property can be quite fragile and will only be preserved if the conformal class and thickness of the curve do not change along the flow---\figref{Bubbling} shows that bubbling phenomena readily materialize when the conformal constraint is the only active constraint. 
To remedy this instability, we also constrain both the area and the enclosed volume of the surface---Gruber and Aulisa
previously showed that constraining just these two scalar quantities is enough to ensure the stability of thin tubes without any conformality constraint~\cite{Gruber:2020:CpW}. 
So what does the conformal class add in this setting? Numerical experiments suggest that it additionally fixes the total torsion of the curve---below, we will justify these results with approximations of the conformal modulus for thin tori. }

Consider an immersion \(\gamma: S^1\to\RR^3\), we get a 1-form \(ds = |d\gamma|\) on \(S^1\). Given in addition a thickness function \(a:S^1\to\RR\), in the limit of small and slowly varying \(a\), we can compute various global invariants (area, volume, Willmore energy) of the tube around \(\gamma\) with thickness \(a\):
\begin{align*}
  \mathcal{A} & \approx 2\pi\int_{S^1} a~ds, &
  \mathcal{V} & \approx \pi \int_{S^1} a^2~ds, &
  \mathcal{W} & \approx \frac{\pi}{2}\int_{S^1}\left(\tfrac{1}{a} + \frac{a \kappa^2}{2}\right) ds.
\end{align*}
The above approximate equalities already explain the mentioned phenomenon nicely: if we fix a thickness function \(a\), then (in the limit of small \(a\) we can forget about the term that involves \(\kappa\)) a critical point has to have constant thickness \(a\). 

\begin{figure}[!h]
  \includegraphics[width=\columnwidth]{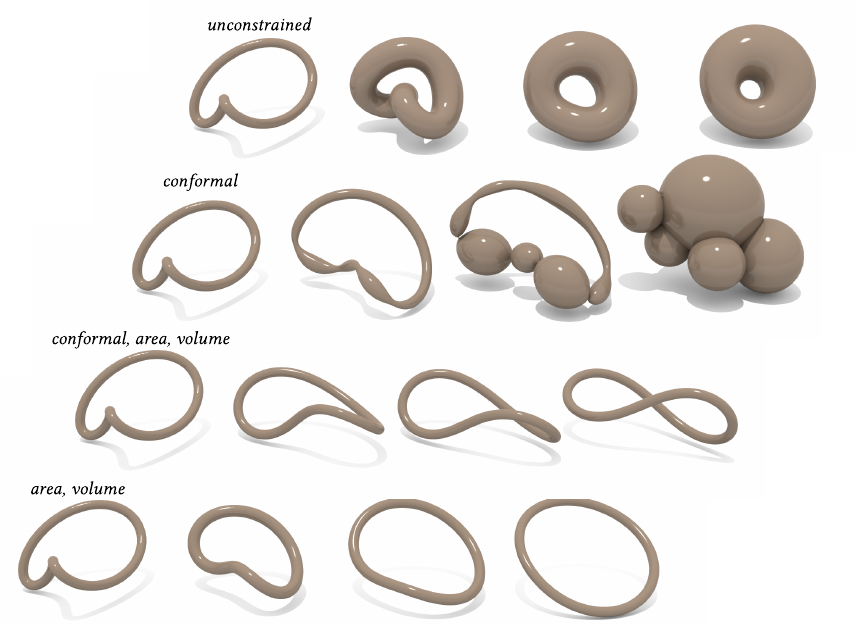}  
  \centering
  \caption{From top to bottom. \\\figloc{Row 1:} unconstrained Willmore minimization. \\\figloc{Row 2:} Willmore minimization under a soft conformal constraint. Bubbling occurs even though the surface remains approximately conformal throughout the flow. \\\figloc{Row 3:} Willmore minimization under area, volume, and conformality constraints. All three of these constraints ensure both the stability of the thin tube around a centerline and the total torsion of the curve. \\\figloc{Row 4:} Willmore minimization under area and volume constraints. The stability of the thin tube persists, but the total torsion of the curve is not conserved.}
  \label{fig:Bubbling}
\end{figure}

As a conformal torus, our tube with varying thickness will be conformally equivalent to \(\CC/\Gamma\) where the lattice \(\Gamma\) is generated by \(2\pi\in\RR\subset\CC\) and another generator \(\tau\in\CC\). 
Now we will compute an approximation of \(\tau\) from the extrinsic geometry of the surface. Choosing a parallel frame (with monodromy angle \(\Theta\) equal to the total torsion) along \(\gamma\), we obtain the differntial \(d\varphi\) of an angle coordinate \(\varphi\) on the tube of varying thickness. Under our assumptions, the 1-form \(d\varphi\) will be approximately harmonic with period \(2\pi\) in the \(\varphi\)-direction and with period \(\Theta\) along the \(s\)-direction. This already gives us the real part of \(\tau\): \[\operatorname{Re}\tau = \Theta.\] The integral of \(*d\varphi\) vanishes on the short loops in the \(\varphi\)-direction. The integral of \(*d\varphi\) around a loop in the \(s\)-direction is \[\operatorname{Im}\tau = \int_{S^1}\frac{1}{a}~ds.\]
This is interesting since it implies that for the purpose of stabilizing constant thickness, the conformal type (in the form \(\operatorname{Im}\tau\)) has a similar effect as the Willmore functional.

When we do experiments where the conformal structure is fixed, we are effectively using not parameterization by arclength \(s\) but parameterization by \emph{conformal modulus} \(y\) which (up to an additive constant) is given by \[ dy = \frac{1}{a}~ds. \]
In other words, \[ds = a~dy\] and the area and the volume can be written as
\begin{align*}
  \mathcal{A} & \approx 2\pi\int_{S^1} a^2~dy, &
  \mathcal{V} & \approx \pi \int_{S^1} a^3~dy, &
  \mathcal{W} & \approx \frac{\pi}{2}\int_{S^1}\left(1 + \frac{a^2 \kappa^2}{2}\right) dy.
\end{align*}
Though admittedly crude, these approximations provide useful intuition about the extrinsic geometric features that are fixed by the conformal class.

\subsection{Flux Constraints}
\label{sec:FluxConstraints}
{
Since, up to topological constants, the Willmore energy is equal to the Dirichlet energy of the Gauss map, it is natural to ask whether it is possible to simultaneously prescribe the position and tangent plane of a point in a variational problem. Just as it is not possible to prescribe the positions of isolated points for minimal surfaces, this problem is not well-posed as stated. One way to practically resolve this is by replacing the point constraint with a Dirichlet boundary condition after cutting out a disk around the desired point (\figref{normal_prescription}). The radius of this disk is a regularization parameter that decreases along with decreased influence of the prescribed normal. 

\begin{figure}[htpb]
  \centering
  \includegraphics[width=\columnwidth]{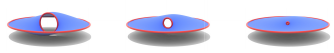}
  \caption{Prescribing the normal vector at an isolated point in Willmore minimization is not well-posed, illustrated by the sequence of Willmore surfaces converging to a flat disk with a shrinking boundary component encoding the desired normal direction.\label{fig:normal_prescription}}
\end{figure}

Although we cannot simultaneously prescribe the tangent plane and position of a point, we can prescribe the tangent plane alone during Willmore minimization by taking advantage of certain conservation laws of the Willmore energy. In the discrete setting, we
consider the prescription of the Willmore flux at isolated points \(i\), meaning that  \[ \grad\Willmore_i = \nu_i,\] \(\nu_i\in\RR^3\). Since the gradient of the Willmore energy defines a vertex normal vector, prescribing \(\nu_i\) fixes the tangent plane of the point. The flux constraint can alternatively be interpreted as specifying the Lagrange multiplier associated with a point constraint instead of the position of the point. 
}

\paragraph*{Balancing Condition}
{ In \secref{WillmoreConservationLaws} we will prove that the gradient of the discrete Willmore energy} can be written in a discrete divergence form
\[ \grad\Willmore = \dsf\tau \]
for some \(\tau\in\Omega^1(\Msf^*;\RR^3)\) (\prpref{WillmoreFlux}) that we call the Willmore flux form. It expresses the conservation law coming from the translational invariance of the energy, which implies that valid flux configurations need to satisfy a balancing condition. 

At a critical point, \(\dsf\tau = 0\) which implies that \(\tau\) defines a cohomology class \([\tau]\in H^1(\mesh^*;\RR^3)\) which we call the \emph{Willmore flux} of the surface. If point or flux constraints are in play then \(\dsf\tau_i = \nu_i\) at the constraint vertices \(i\). Since \(\tau\) is still closed away from these points it defines a cohomology class on the surface punctured at these points (\cf \secref{ConformalParameters}). Therefore, the homology of the surface will imply a necessary balancing conditions on \(\nu_i\) for them to be the Lagrange multipliers of a critical point. 

For simplicity, suppose that \(\Msf\) is topologically a sphere. The homology of a sphere punctured at \(n\) points is \((n-1)\)-dimensional, and a presentation of this group is given by the loops around each point constraint with the only relation that the sum of all of these cycles is equal to zero. Since \(\nu_i\) is the integral of \(\tau\) around the boundary of the dual face containing \(i\), this relation implies that 
\begin{equation}\sum_{i}\nu_i = 0.\label{eq:balancing_equation}\end{equation}
In the higher genus case, we use that when \(n > 1\), a genus \(g\) surface with \(n\) points removed is homotopy equivalent to a wedge of \(2g + n - 1\) circles, and a simple presentation of the homology group is given by the \(2g\) homology generators of \(\Msf\) and \(n\) loops around each puncture with the relation that the sum of all of these cycles is equal to zero. This relation then implies that 
\begin{equation}\sum_{i}\xi_i + \sum_{j=1}^{2g}\int_{\gamma_j}\tau = 0,\label{eq:balancing_equation_general}\end{equation} 
where \(\{\gamma_j\}_{j=1}^{2g}\) are generators of \(H^1(\Msf)\) realized as oriented 1-chains in the dual mesh. 

\begin{figure}[htpb]
  \centering
  \includegraphics[width=\columnwidth]{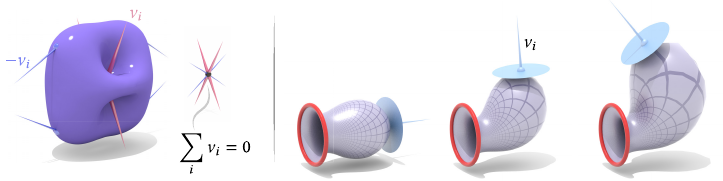}
  \caption{\figloc{Left:} The fluxes \(\nu_i\) at point constraints are visualized with blue vectors indicating a negative scale and red ones indicating a positive scale. Homology then implies that the sum of the fluxes of a constrained Willmore sphere vanish. \figloc{Right: }Specifying the Willmore flux provides a natural deformations between constrained Willmore surfaces obtained by manipulating an external force pulling on the surface.\label{fig:flux}\label{fig:FluxBalancing}}
\end{figure}

\paragraph*{Constrained Willmore Surfaces}
For a constrained Willmore surface \(\tau\) is no longer closed, but since the Lagrange multipliers coming from the conformal constraint only change the Euler-Lagrange equations by the exterior derivative of an extrinsic conformal stress \(\mu\) we instead have that \(\tau - \mu\) is closed. Just as above, we then have a cohomology class containing this closed form that we will call \([\tau_{\mu}]\in H^1(\mesh^*;\RR^3)\). The balancing conditions are equally applicable for a constrained Willmore surface with point constraints if one takes into account the addition of \(\dsf\mu\) to the Euler-Lagrange equations. {Observe, however, that the incorporation of the extrinsic conformal stress may result in \(\nu_i\) no longer being a normal vector of the resulting surface (\figref{FluxBalancing}).}

\subsection{Scale Constraints}
\label{sec:Scale Constraints}
For conformal immersions the metric in the tangent space of a point, relative to another metric in the conformal class, can be specified by a scalar quantity: the logarithmic conformal scale factor. Prescribing the scale factor at a point provides a useful modeling tool where the area distortion around a point can be controlled. This allows for the specification of the size of a bump (\figref{PlanarScaleConstraint}) and metric rigidity (\figref{FixedMetric}).

\begin{figure}[htpb]
  \centering
  \includegraphics[width=\columnwidth]{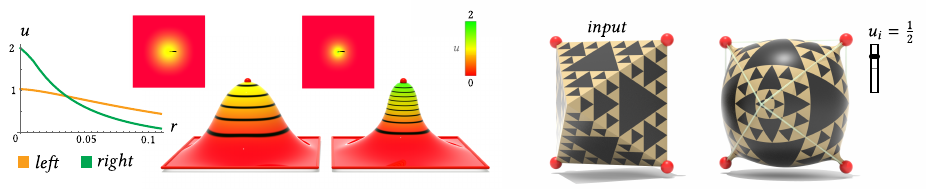}
  \caption{\figloc{Left:} Manipulating the conformal scale factor at a point can be used to control the size of an interpolating bump. Plotting the falloff of the conformal scale factor (relative to the flat square) near the constraint point provides a description of the bump. \figloc{Right:} A discrete constrained Willmore surface with scale constraints conformally equivalent to the regular octahedron. \label{fig:PlanarScaleConstraint}}
\end{figure}

\paragraph*{Discretization}
To discretize the conformal scale factor constraint we can use an averaging procedure. Any triangle \(\eijk\) with two discrete metrics \(\ell,\tilde{\ell}\) are automatically conformally equivalent \(\ell = e^{u}\tilde\ell\) with vertex scale factors \[ \usf_i^{\jk} \coloneqq (\log{\ell}_{\eij} - \log{\ell}_{\jk} + \log{\ell}_{\ki}) - (\log\tilde{\ell}_{\eij} - \log\tilde{\ell}_{\jk} + \log\tilde{\ell}_{\ki}). \]
\setlength{\columnsep}{1em}
\setlength{\intextsep}{0em}
\begin{wrapfigure}{r}{45pt}
   \includegraphics{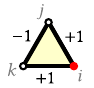}
\end{wrapfigure}
Therefore, the conformal scale factor of a discrete metric \(\ell\) relative to a fixed metric \(\tilde{\ell}\) in the conformal class can be computed by averaging these per-face vertex scale factors 
\[ \usf_i \coloneqq \tfrac{1}{n_i}\sum_{\eijk}\usf_i^{\jk}, \] where \(n_i\) is the valence of \(i\). 
The right-hand side is obviously defined for all metrics, conformal or not, and so it can be incorporated as an additional constraint to the variational problem.
\setlength{\intextsep}{1em}

For a given vertex \(i\in\Vsf\) we can express the scale factor as a linear function of the logarithmic edge lengths, \(\usf_i = \Usf_i(\log\ell - \log\tilde\ell)\), where \(\Usf_i\in\RR^{1\times|\Esf|}\) is given by summing local \(1\times 3\) contributions associated to each face containing the vertex: \[\Usf_i \coloneqq \begin{blockarray}{cccc}
  \textcolor{commentblue}{\eij} & \textcolor{commentblue}{\jk} & \textcolor{commentblue}{\ki} \\
  \begin{block}{(ccc)c}
    1 & -1 & 1 & \\
  \end{block}
  \end{blockarray}.\]  A collection of \(m\) point constraints can be specified in terms of the matrix \(\Usf\in\RR^{m\times|\Esf|}\) where the row corresponding to the scale constraint at \(i\in\Vsf\) is equal to \(\Usf_i\).
% \begingroup
% \paragraph*{Bubbling}
\paragraph*{Scale Adjustment}
\setlength{\columnsep}{1.5em}
\setlength{\intextsep}{0em}
\begin{wrapfigure}{r}{50pt}
   \includegraphics{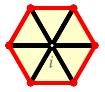}
\end{wrapfigure}
The Willmore minimizers when the conformal scale factor at isolated vertices are prescribed are smooth throughout the surface, but exhibit very sharp cones near the scale constraint points (\figref{ScaleBubbling}).
This undesirable behavior can be easily resolved by instead fixing the conformal scale factor at all the adjacent vertices to the desired vertex, leaving the central vertex out (red vertices, inset). 
This fixes the metric on a small circle around the constraint point, and prevents the sharp cone artifact from developing.
\setlength{\intextsep}{1em}

\begin{figure}[htpb]
  \centering
  \includegraphics{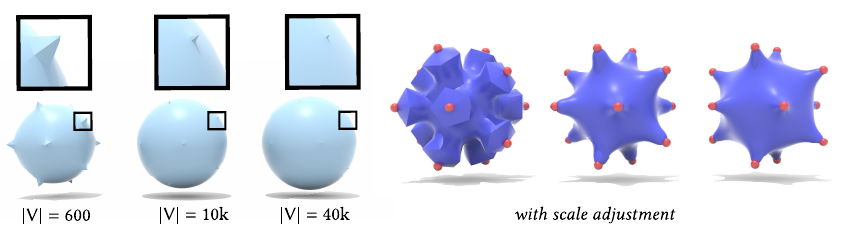}
  \caption{We fix the positions of points on \(S^2\) at the vertices of an icosahedron and specify a logarithmic discrete conformal scale factor of \(u = 1\), relative to the round metric on \(S^2\), at all of these points. \figloc{Left:} undesirable sharp features that do not disappear under refinement arise if one only prescribes the vertex scale factor at isolated vertices. \figloc{Right:} prescribing the metric on the link instead produces smooth surfaces. \label{fig:ScaleBubbling}}
\end{figure}

\section{Conservation Laws}
\label{sec:WillmoreConservationLaws}

In this section, we derive the conservation laws for the discrete Willmore energy that come from its invariance under M{\"o}bius transformations. We start with the conservation law induced by the translation invariance, which states that the gradient 2-form is exact. We will identify \(\RR^3\) with the imaginary quaternions; for a vector \(x\in\RR^3\), we denote its quaternionic inverse by \(x^{-1}\coloneqq -x/|x|^2\) and its normalization by \(\hat{x} = \tfrac{x}{|x|}\).

{
\paragraph{Quaternions, Circles, and Spheres}
\setlength{\columnsep}{1.5em}
\setlength{\intextsep}{0em}
\begin{wrapfigure}{r}{80pt}
   \includegraphics{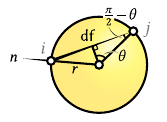}
\end{wrapfigure}
\setlength{\intextsep}{1em}
We will need a couple of standard facts about the quaternionic product of edge vectors around a triangle and an edge---proved, for example, in~\cite{Muller:2021:DCT}. 
We start though by recalling the elementary geometric fact that the curvature \(\kappa\) of a sphere \(S\) going through two points \(\fsf_i,\fsf_j\in\RR^3\) can be computed as 
\[
  \kappa = 2\langle n, \dsf\fsf^{-1}\rangle,
\]
where \(n\) is the normal of \(S\) at \(\fsf_i\) and \(\dsf\fsf = \fsf_j - \fsf_i\).\footnote{{With the notation in the inset diagram, \(|\dsf\fsf| = 2r\sin\theta = 2r\cos(\tfrac\pi2 - \theta) = -2r\langle n,\tfrac{\dsf\fsf}{|\dsf\fsf|}\rangle\). Hence, \(\kappa = \tfrac1r = 2\langle n,-\tfrac{\dsf\fsf}{|\dsf\fsf|^2}\rangle = 2\langle n, \dsf\fsf^{-1}\rangle\).}}

\begin{lemma}[\cite{Muller:2021:DCT} Lemma 9, 10]
  Let \(\fsf_i,\fsf_l,\fsf_j,\fsf_k\in\RR^3\) be a four points not lying on a common circle. Consider the circumcircle of \(\fsf_i, \fsf_j, \fsf_k\) oriented according to this ordering, and let \(t_{\eijk}^i\) be the tangent of this circle at \(\fsf_i\) (similarly, let \(t_{\jil}^i\) be the tangent of the oriented circle through \(\fsf_j,\fsf_i,\fsf_l\)). Let \(\beta_{\eij}\in[0,\pi]\) be the circumcircle intersection angle (the angle between \(t_{\eijk}^i\) and \(t_{\jil}^i\)). Consider the circumsphere (or plane) of the four points oriented so that \(t_{ijk}^i, t_{\jil}^i\) forms a positively oriented basis of the tangent space of the sphere at \(\fsf_i\), and let \(n_{\eij}^i\) be the outward pointing unit normal vector of the sphere at \(\fsf_i\).  Then \[\widehat{\dsf\fsf}_{\eij}\widehat{\dsf\fsf}_{\jk}\widehat{\dsf\fsf}_{\ki} = t_{\eijk}^i, \qquad \widehat{\dsf\fsf}_{\il}\widehat{\dsf\fsf}_{\lj}\widehat{\dsf\fsf}_{jk}\widehat{\dsf\fsf}_{ki} = -\cos\beta_{\eij} + \sin\beta_{\eij}\, n_{\eij}^i.\] 
  \label{lem:HCrossRatio}
\end{lemma}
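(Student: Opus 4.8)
The plan is to derive both identities by pure quaternion algebra, using the single structural fact that for two unit vectors $u,v\in\RR^3$ regarded as imaginary quaternions the product decomposes as $uv = -\langle u,v\rangle + u\times v$, so its real part records a cosine and its imaginary part an oriented axis. Since only edge vectors occur, I may translate so that $\fsf_i = 0$, and I will use freely that the quaternion norm is multiplicative, so any product of unit imaginary quaternions is itself a unit quaternion.

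\emph{First identity.} Write $a=\fsf_j$, $b=\fsf_k$ after the translation. Using $a^2=-|a|^2$, $b^2=-|b|^2$ one obtains $a(b-a)b = |a|^2 b - |b|^2 a$ in a single line, hence
\[
  \widehat{\dsf\fsf}_{ij}\,\widehat{\dsf\fsf}_{jk}\,\widehat{\dsf\fsf}_{ki}
  = \frac{a}{|a|}\cdot\frac{b-a}{|b-a|}\cdot\frac{-b}{|b|}
  = \frac{|b|^2 a - |a|^2 b}{|a|\,|b-a|\,|b|},
\]
which shows that the product $p := \widehat{\dsf\fsf}_{ij}\widehat{\dsf\fsf}_{jk}\widehat{\dsf\fsf}_{ki}$ is a unit vector lying in the plane $\operatorname{span}(a,b)$ of the triangle. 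It then remains to identify its direction. The circumcenter $c$ of $\{0,a,b\}$ satisfies $\langle c,a\rangle=\tfrac12|a|^2$ and $\langle c,b\rangle=\tfrac12|b|^2$, and the tangent to the circumcircle at $0$ is the line through $0$ orthogonal to $c$; from $\langle |b|^2 a - |a|^2 b,\,c\rangle = |b|^2\langle a,c\rangle - |a|^2\langle b,c\rangle = 0$ we see that $p$ spans exactly this line. That $p$ points along the prescribed orientation $i\to j\to k$ and not against it follows by continuity: the configuration space of non-colinear ordered triples in $\RR^3$ is connected, both $p$ and $t_{\eijk}^i$ vary continuously and never vanish, so it suffices to check the sign on one model configuration; the limiting colinear case, where the circumcircle degenerates to a line, is covered by the same formula. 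This gives $p = t_{\eijk}^i$.

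\emph{Second identity.} The point is that the four-fold product telescopes into a product of two of the tangents above. Since $\widehat{\dsf\fsf}_{ij}^2 = -1$ we have $\widehat{\dsf\fsf}_{ji}\widehat{\dsf\fsf}_{ij} = 1$; inserting this between the second and third factors,
\[
  \widehat{\dsf\fsf}_{il}\widehat{\dsf\fsf}_{lj}\widehat{\dsf\fsf}_{jk}\widehat{\dsf\fsf}_{ki}
  = \bigl(\widehat{\dsf\fsf}_{il}\widehat{\dsf\fsf}_{lj}\widehat{\dsf\fsf}_{ji}\bigr)\bigl(\widehat{\dsf\fsf}_{ij}\widehat{\dsf\fsf}_{jk}\widehat{\dsf\fsf}_{ki}\bigr)
  = t_{\jil}^i\cdot t_{\eijk}^i,
\]
the first parenthesis being the first identity applied to the triangle $\jil$ (up to cyclic rotation of its vertices, so its tangent at $\fsf_i$ is $t_{\jil}^i$) and the second being the first identity for $\eijk$. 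Both $t_{\jil}^i$ and $t_{\eijk}^i$ are unit tangent vectors at $\fsf_i$ to the two circumcircles, and these lie on the circumsphere $S$ of the four points; hence they are unit imaginary quaternions in $T_{\fsf_i}S$, and
\[
  t_{\jil}^i\cdot t_{\eijk}^i
  = -\langle t_{\jil}^i,t_{\eijk}^i\rangle + t_{\jil}^i\times t_{\eijk}^i
  = -\cos\beta_{\eij} + \sin\beta_{\eij}\,N,
\]
where $\beta_{\eij}\in[0,\pi]$ is the angle between the two tangents (so $\sin\beta_{\eij}\ge 0$) and $N$ is the unit vector along $t_{\jil}^i\times t_{\eijk}^i$; $N$ is automatically normal to $S$ at $\fsf_i$, being orthogonal to both tangent vectors. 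To finish, one checks that the prescribed orientation of $S$ — the one making $t_{\eijk}^i,t_{\jil}^i$ a positive tangent basis at $\fsf_i$ — selects exactly $N$ as the outward normal $n_{\eij}^i$. When the four points happen to be coplanar, ``$S$'' is read as that plane and nothing changes.

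\emph{Main obstacle.} The algebra is short; the delicate point is purely one of orientation bookkeeping — pinning the sign in $p = t_{\eijk}^i$ and, more importantly, confirming that $N$ is the \emph{outward} (not inward) normal of $S$ at $\fsf_i$ under the stated convention. Since these are exactly Lemmas 9 and 10 of \cite{Muller:2021:DCT}, one may alternatively cite them verbatim; I would nevertheless record the telescoping identity explicitly, as it is the structural fact invoked in the sequel.
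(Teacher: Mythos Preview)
The paper does not prove this lemma at all; it is quoted as Lemmas~9 and~10 of \cite{Muller:2021:DCT} and used as a black box. Your argument is therefore not a reconstruction of the paper's reasoning but an independent, self-contained derivation---and it is correct. The computation for the first identity is clean (the vector $|b|^2a-|a|^2b$ being orthogonal to the circumcenter is the key geometric fact, and the continuity argument for the sign is valid since the configuration space is connected and both unit vectors vary continuously on the same line). The telescoping reduction of the four-term cross ratio to a product of two circle tangents via the insertion $\widehat{\dsf\fsf}_{ji}\widehat{\dsf\fsf}_{ij}=1$ is exactly the structural observation one wants, and it makes transparent why the imaginary part is normal to the circumsphere.

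On the orientation point you flag: your algebra gives the imaginary part along $t_{\jil}^i\times t_{\eijk}^i$, unambiguously. Under the common convention ``$(e_1,e_2)$ positive tangent frame $\Rightarrow e_1\times e_2$ outward'' one would obtain $n_{\eij}^i$ along $t_{\eijk}^i\times t_{\jil}^i$, i.e.\ the \emph{opposite} sign. A direct check on four coplanar points (e.g.\ $\fsf_i=0$, $\fsf_j=e_1$, $\fsf_k=e_1+e_2$, $\fsf_l=e_1-e_2$, where the product equals $e_3$) confirms that the lemma's $n_{\eij}^i$ coincides with your $N$, so the paper's ``outward'' must be read as compatible with the ordered pair $(t_{\jil}^i,t_{\eijk}^i)$. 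This is purely a matter of convention; your derivation is complete once one such example is recorded.
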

}
\begin{proposition}
  For \(\fsf:\Vsf\to\RR^3\), define the Willmore flux form \(\tau\in\Omega^1(\Msf^*;\RR^3)\) by
  \begin{equation}\tau_{\eij} \coloneqq 2(n_{\il}^i - n_{\lj}^{j} + n_{\jk}^{j} - n_{\ki}^{i})\times \dsf\fsf_{\eij}^{-1}.\label{eq:WillmoreFlux}\end{equation} Here \(n_{\eij}^{p}\) denotes the normal of the circumsphere of \(\fsf_i,\fsf_j,\fsf_k,\fsf_l\) evaluated at the point \(\fsf_p\).
  Then
  \[ \grad\Willmore = \dsf \tau. \]
  \label{prp:WillmoreFlux}
\end{proposition}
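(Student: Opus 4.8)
Plan of attack.

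\medskip

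The statement asks for an explicit divergence form of $\grad\Willmore$; that \emph{some} primitive exists is inexpensive, since translation invariance of $\Willmore$ forces $\sum_{i\in\Vsf}\grad\Willmore_i=0$, which (given the topology of a surface) is the only obstruction to a dual $2$-form being exact. The content is that \eqref{eq:WillmoreFlux} is such a primitive, built from local geometry, and I would verify it by a first-variation computation. By discrete summation by parts --- the boundary terms dropping out because $W\equiv 0$ on $\partial\Msf$ and $\tau$ is supported on interior edges --- the identity $\grad\Willmore=\dsf\tau$ is equivalent to
\[
  \mathring\Willmore=\sum_{\eij\in\Esf_0}\big\langle \tau_{\eij},\ \mathring{\dsf\fsf}_{\eij}\big\rangle
\]
for all variations $\mathring\fsf\colon\Vsf\to\RR^3$. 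Since $\Willmore=\sum_{\eij\in\Esf_0}m_{\eij}\,\beta_{\eij}$ up to an additive constant, $m_{\eij}$ being the number of interior endpoints of $\eij$ (so $m_{\eij}=2$ for a closed mesh), this reduces to the first variation of a single circumcircle intersection angle $\beta_{\eij}$, plus bookkeeping.

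\medskip

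For $\mathring\beta_{\eij}$ I would use \lemref{HCrossRatio}. With $i\to l\to j\to k\to i$ the cyclic order of the quadrilateral around $\eij$ and outer edges $\esf_1=\il$, $\esf_2=\lj$, $\esf_3=jk$, $\esf_4=ki$, the lemma gives that $Q_{\eij}\coloneqq\widehat{\dsf\fsf}_{\il}\widehat{\dsf\fsf}_{\lj}\widehat{\dsf\fsf}_{jk}\widehat{\dsf\fsf}_{ki}=-\cos\beta_{\eij}+\sin\beta_{\eij}\,n_{\eij}^i$ is a unit quaternion with axis $n_{\eij}^i$ and angle parameter $\pi-\beta_{\eij}$. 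Differentiating this identity and using $|n_{\eij}^i|=1$ yields $\mathring\beta_{\eij}=-\langle\mathring Q_{\eij}\bar Q_{\eij},\,n_{\eij}^i\rangle$. Expanding $\mathring Q_{\eij}\bar Q_{\eij}$ by the Leibniz rule produces four summands; using $\mathring a\bar a=\mathring v\times v^{-1}$ for a normalized vector $a=v/|v|$, the summand from the $k$-th factor is $\mathring{\dsf\fsf}_{\esf_k}\times\dsf\fsf_{\esf_k}^{-1}$ conjugated by $R_k$, the product of the preceding normalized edge vectors. Using that conjugation is orthogonal, this summand contributes $\pm\big\langle R_k^{-1}(n_{\eij}^i)\times\dsf\fsf_{\esf_k}^{-1},\ \mathring{\dsf\fsf}_{\esf_k}\big\rangle$ to $\mathring\beta_{\eij}$.

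\medskip

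The crux is to simplify the ``effective normal'' $R_k^{-1}(n_{\eij}^i)$, which two elementary facts about the circumsphere $S$ of $\fsf_i,\fsf_j,\fsf_k,\fsf_l$ accomplish. First, the circumcircle of any three of these four points lies on $S$, so the tangents of such circles at a common vertex are orthogonal to the sphere normal there; hence conjugation by a product of such tangents fixes that normal when the number of factors is even and negates it when odd. Second, for any two of the four points, conjugation by the chord direction $\widehat{\dsf\fsf}_{ab}$ sends $n^a_{\eij}$ to $-n^b_{\eij}$ --- immediate from $n^a_{\eij}-n^b_{\eij}\parallel \fsf_a-\fsf_b$, with $\kappa=2\langle n,\dsf\fsf^{-1}\rangle$ fixing the scalar. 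Peeling factors off $R_k$ via $\widehat{\dsf\fsf}_{v_0v_1}\widehat{\dsf\fsf}_{v_1v_2}=t^{v_0}_{v_0v_1v_2}\widehat{\dsf\fsf}_{v_0v_2}$ (first identity of \lemref{HCrossRatio}) then collapses $R_k^{-1}(n_{\eij}^i)$, up to sign, to one of the four normals $n^{\bullet}_{\eij}$ of $S$; since these differ pairwise by multiples of the sphere's edge vectors, $R_k^{-1}(n_{\eij}^i)\times\dsf\fsf_{\esf_k}^{-1}$ equals $\pm\, n^{q}_{\eij}\times\dsf\fsf_{\esf_k}^{-1}$ for either endpoint $q$ of $\esf_k$. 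Collecting, for a fixed interior edge $\esf$, the coefficient of $\mathring{\dsf\fsf}_\esf$ in $\sum_{\eij}m_{\eij}\,\mathring\beta_{\eij}$ --- a sum over the quadrilaterals having $\esf$ as an outer edge --- the four contributions should assemble into $\pm 2\,(n^i_{\il}-n^j_{\lj}+n^j_{jk}-n^i_{ki})\times\dsf\fsf_\esf^{-1}$ with $\esf$ in the role of $\eij$, the multiplicities accounting for the factor $2$; this is $\tau_\esf$, as required.

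\medskip

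I expect the main obstacle to be this final bookkeeping: tracking the alternating signs through the four conjugations (with the $k=3$ summand picking up an extra sign from the single tangent factor), verifying that peeling $R_k$ always leaves exactly a sign times a circumsphere normal so that the collected coefficient of $\mathring{\dsf\fsf}_\esf$ is literally the four-term expression in \eqref{eq:WillmoreFlux}, and checking that near $\partial\Msf$ the multiplicities $m_{\eij}$, together with the conventions making $W$ vanish and $\tau$ interior-supported, still reproduce \eqref{eq:WillmoreFlux} at boundary vertices with incomplete stars. The geometric input --- \lemref{HCrossRatio}, the identity $\kappa=2\langle n,\dsf\fsf^{-1}\rangle$, and ``circumcircles of triples lie on the circumsphere of the quadrilateral'' --- is light; the difficulty is entirely in marshalling the quaternionic algebra so that \eqref{eq:WillmoreFlux} emerges.
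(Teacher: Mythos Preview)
Your approach is correct and parallels the paper's: both differentiate the quaternionic cross-ratio identity of \lemref{HCrossRatio}, obtain $\mathring\beta_{\eij}$ as a sum of four terms of the form $\langle n\times\dsf\fsf_{e}^{-1},\,\dsf\mathring\fsf_e\rangle$ over the outer edges $e$ of the quadrilateral, and then re-index the sum $\sum_{\eij}\mathring\beta_{\eij}$ by outer edge to read off $\tau$. The difference is in how the four ``effective normals'' are identified. You propose to track the conjugations by the partial products $R_k$, using that conjugation by a chord direction sends $n^a\mapsto -n^b$ on the circumsphere and that circumcircle tangents are orthogonal to the sphere normal; this works, and your observation that $n^l_{\eij}\times\dsf\fsf_{\lj}^{-1}=n^j_{\eij}\times\dsf\fsf_{\lj}^{-1}$ (since $n^l-n^j\parallel\dsf\fsf_{\lj}$) is exactly what is needed to reconcile the result with the paper's formula. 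The paper, however, sidesteps all of this with a symmetry trick: differentiating only $\cos\beta=-\operatorname{Re}(\hat a\hat b\hat c\hat d)$, the first and last factors contribute without any conjugation, and $\operatorname{Im}(\hat a\hat b\hat c\hat d)=\sin\beta\,n_{\eij}^i$ directly by \lemref{HCrossRatio}; for the middle two factors one invokes $\beta(a,b,c,d)=\beta(c,d,a,b)$, so that $c,b$ become the outer factors and \lemref{HCrossRatio} based at $j$ gives $\operatorname{Im}(\hat c\hat d\hat a\hat b)=\sin\beta\,n_{\eij}^j$. This yields the four-term expression for $-\mathring\beta_{\eij}$ in one line with no conjugation bookkeeping, and the re-indexing step is then identical to yours. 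Your geometric lemmas are correct but unnecessary once this symmetry is used.
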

\begin{proof}
  Consider a variation \(\mathring{\fsf}:\Vsf\to\RR^3\). Then 
  \[ \mathring{\mathcal{W}} = \sum_{i\in\Vsf}\sum_{j}\mathring{\beta}_{ij} = 2\sum_{ij\in\Esf}\mathring{\beta}_{ij} \] since \(\beta_{ij} = \beta_{ji}\).
  To compute \(\mathring{\beta}_{ij}\) differentiate the real part of the normalized cross ratio.  {By \lemref{HCrossRatio} the circumcircle intersection angle can be computed as}
  {\InsertBoxR{1}{\begin{minipage}{80pt}\centering
    \includegraphics{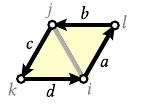}
  \end{minipage}}[1]}
  \[\beta_{ij} = \beta(\dsf\fsf_{il},\, \dsf\fsf_{lj},\, \dsf\fsf_{jk},\, \dsf\fsf_{ki})\] 
  where \(\beta : \RR^3\times \RR^3\times\RR^3\times\RR^3\to {[0,\pi]}\) is defined by \[\cos\big(\beta(a,b,c,d)\big) = -\operatorname{Re}(\,\hat{a}\hat{b}\hat{c}\hat{d}\,).\]
  The variations of \(\beta(a,b,c,d)\) in terms of \(\mathring{a}\) and \(\mathring{d}\) can be computed in a symmetric way (suppressing the dependence on the parameters below) {when \(\mathring{b} = \mathring{c} = 0\):} \[ -\sin(\beta)\, \mathring{\beta} = -\operatorname{Re}\big(\,\mathring{\hat{a}}\hat{a}^{-1}(\hat{a}\hat{b}\hat{c}\hat{d}) + (\hat{a}\hat{b}\hat{c}\hat{d})\hat{d}^{-1}\mathring{\hat{d}}~\big). \] Since \(\hat{a}\) is a unit vector its variations lie in its orthogonal complement, which implies that \( \mathring{\hat{a}}\hat{a}^{-1} = \mathring{\hat{a}}\times \hat{a}^{-1} = \mathring{a}\times a^{-1}\), and a similar argument shows that \(\hat{d}^{-1}\mathring{\hat{d}} = d^{-1}\times \mathring{d}\). Therefore, \begin{align*}
    -\sin(\beta)\, \mathring{\beta} & = -\operatorname{Re}\big(\, (\mathring{a}\times a^{-1})(\hat{a}\hat{b}\hat{c}\hat{d}) + (\hat{a}\hat{b}\hat{c}\hat{d})(d^{-1}\times\mathring{d})~\big) = \langle\operatorname{Im}(\hat{a}\hat{b}\hat{c}\hat{d}),~\mathring{a}\times a^{-1} + d^{-1}\times\mathring{d}\rangle \\ & = \langle a^{-1} \times \operatorname{Im}(\hat{a}\hat{b}\hat{c}\hat{d}),~\mathring{a}\rangle  - \langle d^{-1} \times \operatorname{Im}(\hat{a}\hat{b}\hat{c}\hat{d}),~\mathring{d}\rangle\end{align*} where in the last step we used \(\langle a\times b,c\rangle = \langle a,b\times c\rangle\).
    
    The fact that \(\beta_{ij} = \beta_{ji}\) is reflected in the symmetry of the cross ratio \(\beta(a,b,c,d) = \beta(c,d,a,b)\). This symmetry implies that the variations of \(\beta(a,b,c,d)\) in terms of \(\mathring{b}\) and \(\mathring{c}\) can also be computed in the same symmetric way {when \(\mathring{a} = \mathring{d} = 0\):} \[ -\sin(\beta)\, \mathring{\beta} = \langle c^{-1} \times \operatorname{Im}(\hat{c}\hat{d}\hat{a}\hat{b}),~\mathring{c}\rangle  - \langle b^{-1} \times \operatorname{Im}(\hat{c}\hat{d}\hat{a}\hat{b}),~\mathring{b}\rangle. \] {By \lemref{HCrossRatio},} this then implies that the variation of \(\beta_{ij}\) is equal to 
    \begin{align*}
    -\mathring{\beta}_{ij} & =  \langle \dsf\fsf_{il}^{-1} \times n_{ij}^{i},~\dsf\mathring{\fsf}_{il}\rangle  - {\langle \dsf\fsf_{lj}^{-1} \times n_{ij}^{j},~\dsf\mathring{\fsf}_{lj}\rangle} +  \langle \dsf\fsf_{jk}^{-1} \times n_{ij}^{j},~\dsf\mathring{\fsf}_{jk}\rangle - {\langle \dsf\fsf_{ki}^{-1} \times n_{ij}^{i},~\dsf\mathring{\fsf}_{ki}\rangle}.
    \end{align*}
    Substituting this into the original expression of \(\mathring{\mathcal{W}}\) shows that 
\begin{align*}\mathring{\mathcal{W}} & = 2\sum_{ij\in E}\Big( \langle n_{ij}^{i}\times \dsf\fsf_{il}^{-1},~\dsf\mathring{\fsf}_{il}\rangle  - \langle n_{ij}^{i}\times \dsf\fsf_{ki}^{-1},~\dsf\mathring{\fsf}_{ki}\rangle +  \langle n_{ij}^{j}\times \dsf\fsf_{jk}^{-1},~\dsf\mathring{\fsf}_{jk}\rangle  - \langle n_{ij}^{j}\times \dsf\fsf_{lj}^{-1},~\dsf\mathring{\fsf}_{lj}\rangle\Big) \\ & = \sum_{ij\in E}\langle {-}2(n_{il}^i - n_{lj}^{j} + n_{jk}^j - n_{ki}^i)\times \dsf\fsf_{ij}^{-1},\,\dsf\mathring{\fsf}_{ij}\rangle = {-}\sum_{\eij\in\Esf}\langle\dsf\mathring{\fsf}_{\eij}, \tau_{\eij}\rangle = {\sum_{i\in\Vsf}\langle \mathring{\fsf}_i, \sum_{\eij}\tau_{\eij}\rangle} \\ & {= \sum_{i\in\Vsf}\langle\mathring{\fsf}_i, \dsf\tau_i\rangle = \langle\!\langle \dsf\tau \mid \mathring{\fsf}\rangle\!\rangle,} \end{align*} {showing that \(\grad\Willmore = \dsf\tau\), as desired.}
\end{proof}

\begin{remark}
In \cite{Bobenko:2005:DWF} the authors showed that there is a gradient singularity at \(\beta_{\eij} = 0\) and introduce a simple heuristic to remedy this. Whenever \(|\sin\beta_{\eij}|<\varepsilon\), set the gradient to be equal to zero. In the notation above, this corresponds to setting the undefined circumsphere normals to be zero. We take \(\varepsilon = 10^{-6}\) in double precision. 
\end{remark}
Noether's theorem tells us that the pairing between the gradient 2-form and the surface deformation induced by an infinitesimal M{\"o}bius transformation is exact. For an infinitesimal translation in a direction \(v\in\RR^3\) the conservation law states that \(\langle\grad\Willmore,v\rangle\) is exact, and in \prpref{WillmoreFlux} we explicitly found the potential \(\langle \tau, v\rangle\). 
The other infinitesimal M{\"o}bius transformations are described as follows. The deformation induced by an infintiesimal rotation is given by \(\mathring{f} = w\times f\) for some \(w\in\RR^3\), and the deformation induced by an infinitesimal scaling is given by \(\mathring{f} = \lambda f\) for some \(\lambda > 0\). The remaining M{\"o}bius degree of freedom is given by the composition of a sphere inversion, a translation, and another sphere inversion. The deformation induced by such an infinitesimal M{\"o}bius transformation is \(\mathring{f} = f\times(f\times u) + f\langle f, u\rangle\) for some \(u\in\RR^3\). An explicit expression for the conservation laws associated to these deformations is given in the following proposition.
\begin{theorem}
  Define \(\sigma\in\Omega^1(\Msf^*)\) and \(\rho,\zeta\in\Omega^1(\Msf^*;\RR^3)\) by 
  \begin{align*}
    \sigma_{\eij} & = \langle \fsf_{\eij}, \tau_{\eij}\rangle \\ 
    \rho_{\eij} & = \fsf_{\eij}\times\tau_{\eij} + \tfrac{1}{2}H_{\eij}\dsf \fsf_{\eij} \\
    \zeta_{\eij} & = \tfrac{1}{2}(\fsf_i\times (\fsf_j\times\tau_{\eij}) + \fsf_j\times(\fsf_i\times\tau_{\eij})) + \fsf_{\eij}\langle \fsf_{\eij},\tau_{\eij}\rangle + \fsf_{\eij}\times(H_{\eij}\dsf \fsf_{\eij})
  \end{align*}
  for every \(\eij\in\Esf\). Here, we denote by \(\fsf\in\Omega^0_{\Esf}(\Msf;\RR^3)\) (see \ref{app:DECLeibniz}) the averaging of the vertex positions to the edges, and \(H = \Csf h\) where \(h:\Esf\to\RR\) is defined by \(h_{\eij} = 2\langle n_{\eij}^i, \dsf \fsf_{\eij}^{-1}\rangle = 2\langle n_{\eij}^j, \dsf \fsf_{\ji}^{-1}\rangle\) is the mean curvature of the edge circumsphere. 
  Then the following relationships hold
  \begin{align*}
    \dsf\tau & = \grad\Willmore \\ 
    \dsf\sigma & = \langle \fsf,\grad\Willmore\rangle \\ 
    \dsf\rho & = \fsf\times\grad\Willmore \\
    \dsf\zeta & = \fsf\times(\fsf\times \grad\Willmore) + \fsf\langle \fsf,\grad\Willmore\rangle.
  \end{align*}
  If \(\fsf\) is a critical point of the Willmore energy then \(\tau,\rho,\sigma,\zeta\) are closed.
\end{theorem}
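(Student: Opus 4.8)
The four forms $\tau,\sigma,\rho,\zeta$ are the discrete Noether currents for the action of the M\"obius group on $\RR^3$, and the four identities are their divergence relations; since the circumcircle intersection angles, and hence $\Willmore$, are M\"obius invariant, every one-parameter subgroup produces a conservation law, and $\tau,\sigma,\rho,\zeta$ are the currents for translations, scalings, rotations, and the inversive generators. The first identity is exactly \prpref{WillmoreFlux}, so the plan is to bootstrap the other three from $\tau$. The structural fact everything leans on is elementary: $\tau_{\eij}$ is a cross product with $\dsf\fsf_{\eij}^{-1}$, which is parallel to $\dsf\fsf_{\eij}$, so $\langle\dsf\fsf_{\eij},\tau_{\eij}\rangle=0$ on every edge, and therefore $\langle\fsf_i,\tau_{\eij}\rangle=\langle\fsf_j,\tau_{\eij}\rangle=\langle\fsf_{\eij},\tau_{\eij}\rangle$.

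The $\sigma$-identity is then immediate: $(\dsf\sigma)_i=\sum_{\eij}\langle\fsf_{\eij},\tau_{\eij}\rangle=\sum_{\eij}\langle\fsf_i,\tau_{\eij}\rangle=\langle\fsf_i,(\dsf\tau)_i\rangle=\langle\fsf_i,\grad\Willmore_i\rangle$. For $\rho$, I would split $\fsf_{\eij}=\fsf_i+\tfrac12\dsf\fsf_{\eij}$ to get $(\dsf\rho)_i=\fsf_i\times\grad\Willmore_i+\tfrac12 E_i$, where $E_i\coloneqq\sum_{\eij}\bigl(\dsf\fsf_{\eij}\times\tau_{\eij}+H_{\eij}\dsf\fsf_{\eij}\bigr)$; the entire problem is to show $E_i=0$ at each vertex. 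Writing $N_{\eij}\coloneqq n^i_{\il}-n^j_{\lj}+n^j_{\jk}-n^i_{\ki}$, so $\tau_{\eij}=2N_{\eij}\times\dsf\fsf_{\eij}^{-1}$, a one-line BAC--CAB expansion gives $\dsf\fsf_{\eij}\times\tau_{\eij}=-2N_{\eij}-2\langle\dsf\fsf_{\eij},N_{\eij}\rangle\dsf\fsf_{\eij}^{-1}$; since each of $n^i_{\il},\dots$ is a circumsphere normal at a point lying on its sphere together with the other endpoint of $\dsf\fsf_{\eij}$, the elementary lemma recalled before \lemref{HCrossRatio} rewrites those inner products through the edge mean curvatures $h$ (for which $h_{ab}$ is just the reciprocal radius of the edge circumsphere), turning the second term into an explicit scalar multiple of $\dsf\fsf_{\eij}$. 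Summing over the one-ring of $i$ with neighbors $j_1,\dots,j_m$ in cyclic order, the circumsphere normals evaluated \emph{at} $\fsf_i$ telescope away; for the normals evaluated at the neighbors I would use the reflection identity $n^S_a-n^S_b=-\tfrac1r(\fsf_b-\fsf_a)$ valid for any radius-$r$ sphere $S$ through $\fsf_a,\fsf_b$, and reindexing it around the one-ring collapses the leftover $-2\sum_{\eij}N_{\eij}$ to exactly the negative of the $\sum_{\eij}H_{\eij}\dsf\fsf_{\eij}$ contribution, so $E_i=0$. This matching of the circumspheres of consecutive one-ring edges against the curvature combination $H_{\eij}$ is where I expect the genuine work---and the sign and orientation bookkeeping---to sit.

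For $\zeta$ I would first collapse its definition. Applying BAC--CAB to the symmetrized double cross product, and using $\langle\fsf_i,\tau_{\eij}\rangle=\langle\fsf_j,\tau_{\eij}\rangle$ together with $\langle\fsf_i,\fsf_j\rangle=\tfrac12(|\fsf_i|^2+|\fsf_j|^2)-\tfrac12|\dsf\fsf_{\eij}|^2$, the four terms of $\zeta_{\eij}$ collapse to the compact form $\zeta_{\eij}=\tfrac12(|\fsf_i|^2+|\fsf_j|^2)\,\tau_{\eij}+2\,\fsf_{\eij}\times\rho_{\eij}$, i.e.\ $\zeta$ is the edge-average of $|\fsf|^2$ times $\tau$ plus twice the edge-average of $\fsf$ crossed with $\rho$. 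Now the DEC Leibniz rule for the exterior derivative of an edge-averaged $0$-form times a dual $1$-form (\ref{app:DECLeibniz}), combined with the identity $\dsf\fsf_{\eij}\times\rho_{\eij}=-\tfrac12(|\fsf_j|^2-|\fsf_i|^2)\,\tau_{\eij}$ (one more BAC--CAB plus $\langle\dsf\fsf_{\eij},\tau_{\eij}\rangle=0$), makes the two correction $2$-forms cancel and leaves $(\dsf\zeta)_i=|\fsf_i|^2\,\grad\Willmore_i+2\,\fsf_i\times(\fsf_i\times\grad\Willmore_i)$; a final BAC--CAB rewrites this as $\fsf_i\times(\fsf_i\times\grad\Willmore_i)+\fsf_i\langle\fsf_i,\grad\Willmore_i\rangle$, the claimed identity for $\dsf\zeta$. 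The closedness statement is then free: at a critical point $\grad\Willmore=0$, so all four right-hand sides vanish and $\tau,\sigma,\rho,\zeta$ are closed.
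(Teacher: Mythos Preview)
Your arguments for $\sigma$ and $\zeta$ are correct. The $\sigma$ computation is essentially identical to the paper's (which phrases it via the DEC Leibniz rule of \ref{app:DECLeibniz}, but that amounts to your one-line expansion). Your $\zeta$ argument, on the other hand, is a genuinely different and cleaner route than the paper's. The paper starts from the right-hand side $\fsf_i\times(\fsf_i\times\grad\Willmore_i)+\fsf_i\langle\fsf_i,\grad\Willmore_i\rangle$, substitutes the already-proven $\rho$ and $\sigma$ identities, and grinds through a fairly long manipulation using the Jacobi identity. Your approach---collapsing $\zeta_{\eij}$ to $\tfrac12(|\fsf_i|^2+|\fsf_j|^2)\,\tau_{\eij}+2\,\fsf_{\eij}\times\rho_{\eij}$ and then applying Leibniz twice with the auxiliary identity $\dsf\fsf_{\eij}\times\rho_{\eij}=-\tfrac12(|\fsf_j|^2-|\fsf_i|^2)\,\tau_{\eij}$ so that the two correction $2$-forms cancel---is shorter and more structural; I checked that the algebra closes exactly as you describe.

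The gap is in your $\rho$ argument. You correctly reduce to showing $E_i=\sum_{\eij}(\dsf\fsf_{\eij}\times\tau_{\eij}+H_{\eij}\dsf\fsf_{\eij})=0$ and correctly expand $\dsf\fsf_{\eij}\times\tau_{\eij}=-2N_{\eij}-2\langle\dsf\fsf_{\eij},N_{\eij}\rangle\dsf\fsf_{\eij}^{-1}$. But your proposed bookkeeping---$n^i$ terms telescope, then the $n^j$ leftover via the reflection identity cancels $\sum_{\eij}H_{\eij}\dsf\fsf_{\eij}$---cannot close as written, because it leaves the second BAC--CAB term $\sum_{\eij}(-2\langle\dsf\fsf_{\eij},N_{\eij}\rangle\dsf\fsf_{\eij}^{-1})$ entirely unaccounted for. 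The paper's mechanism is both simpler and in the opposite order: the curvature lemma you cite shows that this scalar coefficient is \emph{exactly} $-H_{\eij}$, so that $\dsf\fsf_{\eij}\times\tau_{\eij}+H_{\eij}\dsf\fsf_{\eij}=-2N_{\eij}$ holds edgewise; only then does one argue that $\sum_{\eij}N_{\eij}=0$ around each vertex (the paper asserts this telescoping in one line, equation \eqref{eq:dftau}). Your reflection identity may well be useful for verifying that last vanishing, but the target is zero, not $-\sum_{\eij}H_{\eij}\dsf\fsf_{\eij}$.
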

\begin{proof}
  By~\thmref{DECLeibniz}
  \[ \dsf\sigma_i = \langle \fsf_i,\dsf\tau_{i}\rangle  + \Asf^*\langle \dsf f\wedge\tau\rangle_i.\]
  Since \(\langle \tau_{\eij},\dsf \fsf_{\eij}\rangle = 0\) for every edge \(\eij\) the second term is zero. Using the conservation law associated to the translational invariance we obtain the conservation law associated to infinitesimal scalings:
  \[ \dsf\sigma_i = \langle \fsf_i,\dsf\tau_i\rangle = \langle \fsf,\grad\Willmore\rangle_i. \]
  We can similarly use~\thmref{DECLeibniz} to compute
  \begin{equation}\dsf\rho_i = \fsf_i\times \dsf\tau_i + \Asf^*(\dsf \fsf\wedge \tau)_i + \dsf(\tfrac12 H\dsf \fsf)_i. \label{eq:drho}\end{equation}
  From \eqref{WillmoreFlux} we have
  \[
    (\dsf \fsf\wedge\tau)_{\eij} = 
    \dsf \fsf_{\eij}\times\tau_{\eij} 
    = {-2}(n_{il}^i - n_{lj}^j + n_{jk}^j - n_{ki}^i) -  H_{ij}\dsf\fsf_{\eij},
  \]
  and so since the first term vanishes when summing around a vertex, we obtain
  \begin{equation}
    \Asf^*(\dsf \fsf\wedge \tau)_i =  - \dsf(\tfrac12H\dsf\fsf)_{i}
    \label{eq:dftau}
  \end{equation}
  This gives the conservation law associated to infinitesimal rotations
  \[
    \dsf\rho_i = (\fsf\times\grad\Willmore)_i.
  \]
  For the last conservation law, we start from the right hand side and use the two conservation laws derived above:
  \begin{align*}
    & \fsf_i\times (\fsf_i\times\grad\Willmore_i) + \fsf_i\langle \fsf_i,\grad\Willmore_i\rangle = \fsf_i\times\dsf\rho_i + \fsf_i\dsf\sigma_i \\ 
    & = \sum_{\eij}\left[\fsf_i\times\big(\fsf_{\eij}\times\tau_{\eij} + \tfrac{1}{2}H_{ij}\dsf \fsf_{\eij}\big) + \fsf_i\sigma_{\eij}\right] \\
    & = \sum_{\eij}\left[\tfrac12\big(\fsf_i\times(\fsf_j\times\tau_{\eij}) + \fsf_i\times(\fsf_i\times\tau_{\eij})\big) + \tfrac12\fsf_i\times(H_{\eij}\dsf\fsf_{\eij}) + \fsf_i\sigma_{\eij}\right] \\
    & = \sum_{\eij}\left[\tfrac12\big(\fsf_i\times(\fsf_j\times\tau_{\eij}) + \fsf_j\times(\fsf_i\times\tau_{\eij})\big) - \tfrac12\dsf\fsf_{\eij}\times(\fsf_i\times\tau_{\eij}) + \tfrac12\fsf_i\times(H_{\eij}\dsf\fsf_{\eij}) + \fsf_i\sigma_{\eij}\right] \\
    & {= \sum_{\eij} \left[\zeta_{\eij} - \fsf_{\eij}\sigma_{\eij} - \fsf_{\eij}\times(H_{\eij}\dsf\fsf_{\eij}) - \tfrac12\dsf\fsf_{\eij}\times(\fsf_i\times\tau_{\eij}) + \tfrac12\fsf_i\times(H_{\eij}\dsf\fsf_{\eij}) + \fsf_i\sigma_{\eij}\right]} \\
    & = \dsf\zeta_{i} - \sum_{\eij} \left[\fsf_{\eij}\sigma_{\eij} + \fsf_{\eij}\times(H_{\eij}\dsf\fsf_{\eij}) + \tfrac12\dsf\fsf_{\eij}\times(\fsf_i\times\tau_{\eij}) - \tfrac12\fsf_i\times(H_{\eij}\dsf\fsf_{\eij}) - \fsf_i\sigma_{\eij}\right]
  \end{align*}
  Using \(\fsf_{\eij}\sigma_{\eij}-\fsf_{i}\sigma_{\eij} = \tfrac12\dsf\fsf_{\eij}\sigma_{ij}\) and \(\fsf_{\eij}\times(H_{\eij}\dsf\fsf_{\eij}) - \tfrac12\fsf_i\times(H_{\eij}\dsf\fsf_{\eij}) = {\tfrac12 \fsf_j \times (H_{\eij}\dsf\fsf_{\eij}) = \tfrac12 (\fsf_j - \dsf\fsf_{\eij}) \times (H_{\eij}\dsf\fsf_{\eij})} = \tfrac12 \fsf_i \times (H_{\eij}\dsf\fsf_{\eij})\)
  we get that 
  \begin{align*}
    \fsf_i\times (\fsf_i\times\grad\Willmore_i) + \fsf_i\langle \fsf_i,\grad\Willmore_i\rangle & = \dsf\zeta_{i} - \tfrac12\sum_{\eij}\left[\dsf\fsf_{\eij}\sigma_{\eij} + \dsf\fsf_{\eij}\times(\fsf_i\times\tau_{\eij}) + \fsf_j \times (H_{\eij}\dsf\fsf_{\eij}) \right]
  \end{align*}
  Using the Jacobi identity for the cross product, we get \(\dsf\fsf_{\eij}\times(\fsf_i\times\tau_{ij}) = \fsf_i\times(\dsf\fsf_{\eij}\times\tau_{\eij}) - \tau_{\eij}\times(\dsf\fsf_{\eij}\times\fsf_i) = \fsf_i\times(\dsf\fsf_{\eij}\times\tau_{\eij}) - \dsf\fsf_{\eij}\langle\fsf_i,\tau_{\eij}\rangle + \fsf_i\langle\tau_{\eij},\dsf\fsf_{\eij}\rangle\), which we can simplify further to \(\fsf_i\times(\dsf\fsf_{\eij}\times\tau_{\eij}) - \dsf\fsf_{\eij}\sigma_{\eij}\) since \(\langle\tau_{\eij},\dsf\fsf_{\eij}\rangle = 0\) and \(\langle\fsf_i,\tau_{\eij}\rangle = \langle\fsf_{\eij},\tau_{\eij}\rangle\). Hence, using \eqref{dftau} we obtain
  \begin{align*}
    \fsf_i\times (\fsf_i\times\grad\Willmore_i) + \fsf_i\langle \fsf_i,\grad\Willmore_i\rangle = \dsf\zeta_{i} - \tfrac12\sum_{\eij}\left[\fsf_i\times(\dsf\fsf_{\eij}\times\tau_{\eij}) + \fsf_i \times (H_{\eij}\dsf\fsf_{\eij})\right] = \dsf\zeta_{i}.
  \end{align*}
\end{proof}

{ Though we have only explored constraints on the translational flux in this work, it would also be compelling to investigate the influence of the other conservation laws in the context of surface modeling. }

\section{Free Boundary Conditions}
\label{sec:BoundaryConditions}
For the purposes of surface modeling, one is interested in manipulating the geometry of a surface in terms of finitely many parameters. This raises the question about what to do with the boundary. Existing work impose Dirichlet or Neumann boundary conditions, but this is asking too much since it requires the specification of the extrinsic boundary geometry \apriori. Moreover, the free boundary conditions coming from a variational problem do not always control the geometry of the surface well. To extend the domain of practicality of the conformal splines to surfaces with boundary we introduce a natural boundary condition for conformal immersions that only fixes the metric on the boundary up to a scale and has many desirable properties for geometric modeling. {More precisely, we look for conformal immersions \(f:\Sigma\to\RR^3\) that are pointwise conformally equivalent to a reference metric \(g_0\) and with conformal scale factor that is locally constant on the boundary, \ie, \(du|_{\partial\Sigma}\equiv 0\). In the remainder of this section, we will introduce a discretization of these boundary condition based on length (half) cross ratios and derive the discrete Euler-Lagrange equations for the resulting variational problem.}

\begin{figure}[h]
  \centering
  \includegraphics[width=\columnwidth]{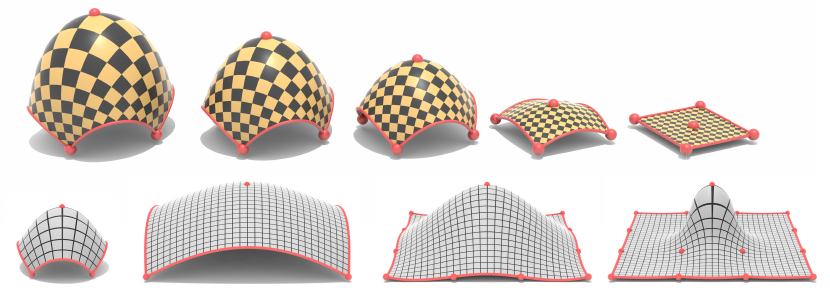}
  \caption{Free boundary conditions enable surface modeling without the specification of any extrinsic geometry (besides the positions of the constrained points).\label{fig:PatchSize}}
\end{figure}

\paragraph*{Boundary First Flattening}
For computing conformal maps into the plane, a natural approach is to consider the Yamabe equation 
\[ \Delta u = K_0 - e^{2u}K, \]
where \(K,K_0\) denote the Gaussian curvature of the metrics \(g = e^{2u}g_0\) and \(g_0\), respectively, since the target curvature 2-form vanishes. The boundary first flattening method of Sawhney and Crane takes this approach~\cite{Sawhney:2017:BFF}, and the algorithm works with either Dirichlet or Neumann boundary conditions for the conformal scale factor. The effect of a Dirichlet boundary condition is straightforward since it determines the metric on the boundary. The Neumann boundary conditions, also known as Cherrier boundary conditions in this context, correspond to prescribing the integrated geodesic curvature of the boundary curve:
\[ \frac{\partial u}{\partial n} = \kappa_0 - e^{u}\kappa. \]
The efficacy of the parameterization algorithm suggests that boundary conditions for the conformal scale factor would also be a natural choice when working with general conformal immersions. 

\paragraph*{Length Half Cross Ratios}
In the context of discrete conformal equivalence, we can relax the Dirichlet boundary conditions on \(u\) to satisfaction modulo constants on each boundary component. ~\\
\setlength{\columnsep}{1em}
\setlength{\intextsep}{0em}
\begin{wrapfigure}{r}{45pt}
  \includegraphics{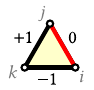}
\end{wrapfigure}
Given a discrete metric \(\ell\in\RR^{|\Esf|}\) we define for every boundary edge \(\eij\in\Esf\setminus\Esf_0\) that is adjacent to the boundary face \(\eijk\) 
the log length half cross ratio \[\log\operatorname{lhcr}_{\eij} = \log\ell_{jk} - \log\ell_{ki}. \]
We now show that the metric on the boundary, up to scale, is encoded in these length half cross ratios.
\setlength{\intextsep}{1em}

\begin{theorem}
  Let \(\ell,\tilde{\ell}\) be discrete metrics on \(\Msf\) that are conformally equivalent. They have the same length half cross ratios if and only if the conformal scale factor is locally constant on the boundary.
  \label{thm:locallyconstant}
\end{theorem}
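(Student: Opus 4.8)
The plan is to express the conformal equivalence of $\ell$ and $\tilde\ell$ through the vertex scale factor $\usf$ and then compute directly how the log length half cross ratio of a boundary edge transforms under the rescaling. By the definition of discrete conformal equivalence there is a function $\usf\in\Omega^0(\Msf)$ with $\log\ell_{\eij}-\log\tilde\ell_{\eij}=\tfrac12(\usf_i+\usf_j)$ for every edge $\eij\in\Esf$. Since the averaging operator $\Asf$ is injective on any simplicial surface containing a triangle (the three edge equations on a single face already force $\usf$ to vanish there), this $\usf$ is unique and is precisely the conformal scale factor of $\ell$ relative to $\tilde\ell$, so the phrase ``the conformal scale factor'' in the statement is unambiguous.

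Next I would fix a boundary edge $\eij\in\Esf\setminus\Esf_0$ lying in the boundary face $\eijk$ and subtract the two log length half cross ratios:
\[
  \log\operatorname{lhcr}^{\ell}_{\eij}-\log\operatorname{lhcr}^{\tilde\ell}_{\eij}
  =\big(\log\ell_{jk}-\log\tilde\ell_{jk}\big)-\big(\log\ell_{ki}-\log\tilde\ell_{ki}\big)
  =\tfrac12(\usf_j+\usf_k)-\tfrac12(\usf_k+\usf_i)=\tfrac12(\usf_j-\usf_i).
\]
The opposite vertex $k$ cancels, leaving only the two endpoints of the boundary edge, so $\ell$ and $\tilde\ell$ have the same length half cross ratio on $\eij$ if and only if $\usf_i=\usf_j$.

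To conclude, I would invoke the combinatorial structure of the boundary: in a simplicial surface each boundary vertex lies on exactly two boundary edges, so the boundary $1$-skeleton is a disjoint union of simple cycles, one per connected component of $\partial\Msf$. Requiring $\usf_i=\usf_j$ for every boundary edge is therefore equivalent to requiring $\usf$ to be constant along each such cycle, i.e.\ to $\usf$ being locally constant on $\partial\Msf$ --- the discrete counterpart of $du|_{\partial\Sigma}\equiv 0$. Chaining this with the computation above gives the claimed equivalence in both directions.

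There is no serious obstacle in this argument; it is a one-line computation together with a bookkeeping observation. The only points deserving care are (i) recording that the scale factor is unique, so that the statement is well-posed, and (ii) making precise that the edgewise condition $\usf_i=\usf_j$ ranging over all boundary edges assembles into genuine constancy on each boundary component --- which is exactly the observation that the boundary edges form disjoint cycles, so that ``same length half cross ratios'' matches ``locally constant scale factor'' and not, say, ``globally constant scale factor''.
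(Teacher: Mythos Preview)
Your proof is correct and is essentially the same computation as the paper's: both reduce equality of the boundary length half cross ratios to \((\dsf\usf)_{\eij}=0\) on every boundary edge and then observe this is exactly local constancy of \(\usf\) on \(\partial\Msf\). Your extra remarks on the uniqueness of \(\usf\) and on the boundary edges forming disjoint cycles are sound elaborations but do not change the underlying argument.
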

\begin{proof}
  Since \(\ell,\tilde{\ell}\) are conformally equivalent there exists some \(\usf:\Vsf\to\RR\) so that \(\tilde{\ell}_{\eij} = e^{(\usf_i + \usf_j)/2}\ell_{\eij}\) for every edge \(\eij\in\Esf\). The condition that they have the same length half cross ratio on a boundary edge \(\eij\) is \[ 1 = \frac{\tilde\ell_{\jk}}{\tilde\ell_{\ki}}\frac{{\ell}_{\ki}}{{\ell}_{\jk}} = e^{{(\dsf\usf)_{\eij}}/{2}}, \] which is equivalent to \((\dsf\usf)_{\eij} = 0\). The result follows since the discrete exterior derivative of \(\usf\) is zero on the boundary if and only if \(\usf\) on the boundary is locally constant. 
\end{proof}

\setlength{\columnsep}{1em}
\setlength{\intextsep}{0em}
\begin{wrapfigure}{l}{80pt}
  \includegraphics{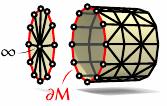}
\end{wrapfigure}
These boundary conditions can also be interpreted as fixing the discrete conformal class of the closed surface obtained by gluing in a disk for each boundary component. The metric on the boundary triangles of the disk has to be chosen so that the boundary lengths agree and so that the remaining two edges have the same lengths. The simplest way to realize this is by attaching a vertex at \(\infty\) to each vertex of a connected component of the boundary. 
This latter interpretation shows that these boundary conditions are related to the free boundary conditions for the M{\"o}bius invariant Willmore energy introduced in~\cite{Bobenko:2005:DWF}: by attaching an imaginary boundary vertex at \(\infty\) to all boundary vertices the circumcircle intersection angles, and therefore also the Willmore energy, can be defined on the boundary---the resulting circumcircle intersection angles on the boundary are equal to the angle between circumcircles and boundary edges. In both cases, these boundary conditions break the existing M{\"o}bius invariance of the setup.
\setlength{\intextsep}{1em}

\paragraph*{Discretization}
Since these boundary conditions are equivalent to fixing the discrete conformal class on a filled in surface, we bundle them into the conformal constraint by constructing a suitable extension \(\Csf\in\RR^{|\Esf|\times|\Esf|}\) of the cross ratio matrix to the boundary. It is defined so that given logarithmic edge lengths \(\lambda:\Esf\to\RR\) the discrete conformal class is given by \(\log\lcr = \Csf\lambda|_{\Esf_0}\) and the length half-cross ratios are given by \(\log\lhcr = \Csf\lambda|_{\Esf\setminus\Esf_0}\). If \(\Msf\) is orientable, it can be built facewise 
\[\Csf = \sum_{\eijk\in\Fsf}\Csf_{\eijk}\]
by summing local \(3\times 3\) matrices 
\[\Csf_{\eijk} = \begin{blockarray}{cccc}
\textcolor{commentblue}{\eij} & \textcolor{commentblue}{\jk} & \textcolor{commentblue}{\ki} \\
\begin{block}{(ccc)c}
  \hphantom{-}0 & \hphantom{-}1 & -1\hphantom{-} & \textcolor{commentblue}{\eij} \\
  -1 & \hphantom{-}0 & \hphantom{-}1\hphantom{-} & \textcolor{commentblue}{\jk} \\
  \hphantom{-}1 & -1 & \hphantom{-}0\hphantom{-} & \textcolor{commentblue}{\ki} \\
\end{block}
\end{blockarray}.\]

\begin{figure}[htpb]
  \centering
  \includegraphics[width=\columnwidth]{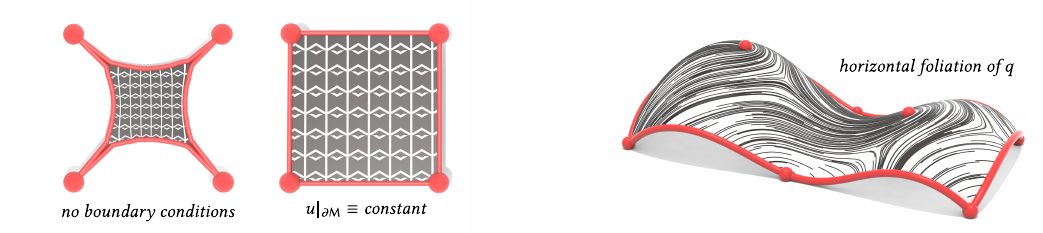}
  \caption{\figloc{Left: }The stabilizing influence of the boundary conditions are well illustrated when considering conformal area minimization with point constraints (here only the four corners of the square are fixed). With no boundary conditions, the surface area of a square patch is able to decrease while preserving the conformal class. The free boundary conditions fixing the metric (up to a scale) prevent this, making the square patch a critical point of area. \figloc{Right: } Visualization of the horizontal foliation of a discrete quadratic differential associated to a conformal surface spline with boundary. The integral curves follow the directions of maximal conformal stress. }
\end{figure}

\paragraph*{Quadratic Differentials}
 Label the boundaries by \(\beta = 1,\dots, b\). Let \(\Vsf_{\beta}\) denote the vertices in the \(\beta\) boundary component. This defines a partition \(\Vsf\setminus \Vsf_0 = \cup_{\beta}\Vsf_\beta\). Define the extended cross-ratio matrix \(\Csf:\RR^{|\Esf|}\to\RR^{|\Esf|}\) by \[ (\Csf\lambda)_{ij} = \lambda_{il} - \lambda_{lj} + \lambda_{jk} - \lambda_{ki}\] if \(ij\in \Esf_0\) and by \[ (\Csf\lambda)_{ij} = \lambda_{jk} - \lambda_{ki} \] if \(ij\) is a boundary edge. Also define \(\Bsf:\RR^{|\Vsf_0|}\times\RR^{b} \to \RR^{|V|}\) by \[\Bsf(u,u^\partial)_i = \begin{cases} u_i & \text{if } i\in \Vsf_0, \\ u^\partial_\beta & \text{if }i\in\Vsf_{\beta}.\end{cases}\] 

 We now use a short exact sequence to describe the Lagrange multipliers for the conformal constraint, including the boundary conditions. 
 \begin{proposition}
  The following sequence is exact \[ 0 \to \RR^{|\Vsf_0|}\times\RR^{b} \xrightarrow{\Asf\circ\Bsf} \RR^{|\Esf|} \xrightarrow{\Csf} \Conf(\mesh) \to 0 \]
 \end{proposition}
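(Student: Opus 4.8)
The plan is to check exactness at all three non-zero slots, after one clarification of what the last term means. Since $\Csf:\RR^{|\Esf|}\to\RR^{|\Esf|}$ is not surjective when $|\Vsf_0|+b>0$, I would read the target $\Conf(\mesh)$ in the statement as the linear subspace $\operatorname{im}\Csf\subseteq\RR^{|\Esf|}$ of realizable data (logarithmic length cross ratios on $\Esf_0$ together with logarithmic length half cross ratios on $\Esf\setminus\Esf_0$); by the disk-capping discussion this is canonically the space of discrete conformal classes of the closed surface obtained by gluing a disk into each boundary component. With this reading, surjectivity of $\Csf$ onto $\Conf(\mesh)$ is automatic, so exactness on the right is free, and what remains is injectivity of $\Asf\circ\Bsf$ and the identity $\ker\Csf=\operatorname{im}(\Asf\circ\Bsf)$.

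First I would dispatch the two easy parts. Injectivity of $\Asf\circ\Bsf$: the map $\Bsf$ is injective because it merely copies the interior values of $u$ and broadcasts each scalar $u^\partial_\beta$ over $\Vsf_\beta$, and $\Asf$ is injective on a connected simplicial surface --- from $\Asf\usf=0$ one gets $\usf_i=-\usf_j$ across every edge, so around any triangle $\eijk$ one has $\usf_i=-\usf_j=\usf_k=-\usf_i$, forcing $\usf\equiv 0$. For the inclusion $\operatorname{im}(\Asf\circ\Bsf)\subseteq\ker\Csf$, i.e.\ $\Csf\circ\Asf\circ\Bsf=0$: write $\usf=\Bsf(u,u^\partial)$, which is locally constant on $\partial\Msf$. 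On an interior edge the cross-ratio combination of the averaged values telescopes to $0$; on a boundary edge $\eij$ of the face $\eijk$ it collapses to $(\Csf\Asf\usf)_{\eij}=\tfrac12(\usf_j+\usf_k)-\tfrac12(\usf_k+\usf_i)=\tfrac12(\usf_j-\usf_i)$, which vanishes because $i$ and $j$ lie in the same boundary component.

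The substance is the reverse inclusion $\ker\Csf\subseteq\operatorname{im}(\Asf\circ\Bsf)$. Suppose $\Csf\lambda=0$. The interior block $\Csf\lambda|_{\Esf_0}=0$ says $e^\lambda$ has the same length cross ratios as the constant metric, so by the discrete conformal equivalence criterion \cite[Proposition 2.3.2]{Bobenko:2015:DCM} there is a $\usf:\Vsf\to\RR$ with $\lambda=\Asf\usf$; alternatively one builds $\usf$ by hand, setting $\usf_i:=\lambda_{\eij}-\lambda_{\jk}+\lambda_{\ki}$ on each incident face $\eijk$, observing that the per-face values at $i$ differ across a shared interior edge by exactly $\mp(\Csf\lambda)_{\eij}$, hence agree and (by connectedness of the face-fan about each vertex) assemble to a global $\usf$ with $\usf_i+\usf_j=2\lambda_{\eij}$ on every edge. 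Feeding $\lambda=\Asf\usf$ into the boundary block $\Csf\lambda|_{\Esf\setminus\Esf_0}=0$ gives $\tfrac12(\usf_j-\usf_i)=0$ on each boundary edge, i.e.\ $\usf$ is locally constant on $\partial\Msf$ --- this is precisely \thmref{locallyconstant}. Therefore $\usf=\Bsf(u,u^\partial)$ with $u=\usf|_{\Vsf_0}$ and $u^\partial_\beta=\usf|_{\Vsf_\beta}$, so $\lambda=\Asf\Bsf(u,u^\partial)\in\operatorname{im}(\Asf\circ\Bsf)$.

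The hard part --- the only place real content enters --- is the reconstruction of a vertex function from a $\lambda$ with trivial interior cross ratios: this is where connectedness of $\Msf$ is essential and where the discrete conformal equivalence theorem (or its hands-on reproof via the face-wise formula for $\usf_i$) does the work. Everything else --- the passage to local constancy on the boundary, and the sanity check that $\dim\Conf(\mesh)=|\Esf|-\dim\operatorname{im}(\Asf\circ\Bsf)=|\Esf|-|\Vsf_0|-b$ agrees with the edge-minus-vertex count of the disk-capped surface --- is routine bookkeeping.
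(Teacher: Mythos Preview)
Your proof is correct and follows essentially the same approach as the paper: the core step is exactly the paper's chain of equivalences --- interior cross ratios vanish iff $\lambda=\Asf\usf$ for some $\usf$ (via the discrete conformal equivalence criterion), and then the boundary half cross ratios vanish iff $\dsf\usf|_{\partial\Msf}=0$, i.e.\ $\usf\in\operatorname{im}\Bsf$ (via \thmref{locallyconstant}). The paper only spells out this middle exactness $\ker\Csf=\operatorname{im}(\Asf\circ\Bsf)$, so your treatment is strictly more thorough in that you also verify injectivity of $\Asf\circ\Bsf$ and clarify the reading of $\Conf(\mesh)$ as $\operatorname{im}\Csf$ to make right exactness automatic.
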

\begin{proof}
  By \thmref{locallyconstant} 
\begin{align*}
\lambda\in\ker\Csf & \iff \Csf\lambda|_{E_0} = 0 \text{ and }\Csf\lambda|_{E\setminus E_0} = 0  \iff \exists u\in\RR^V\text{ satisfying } \lambda = \Asf u \text{ and }\Csf\lambda|_{E\setminus E_0} = 0 \\ & \iff  \exists u\in\RR^V\text{ satisfying } \lambda = \Asf u \text{ and } \dsf u|_{E\setminus E_0} = 0  \iff \lambda\in\operatorname{im}(\Asf\circ\Bsf).
\end{align*}
\end{proof}

Therefore, the Euler-Lagrange equations for variational problems constrained to the conformal class and satisfying the free boundary conditions state that \[ \operatorname{grad}\mathcal{E} - \dsf\mu = 0, \] for some \(q \in \ker(\Bsf^*\circ\Asf^*)\), where \(\mu\) is defined in \eqref{extrinsicQD}. A short computation shows
\begin{align*} \ker(\Bsf^*\circ\Asf^*) = \{q:\Esf\to\RR\mid \sum_{j} q_{\eij} = 0 ~\forall i\in\Vsf_0  \text{ and } \sum_{i\in\Vsf_{\beta}}\sum_{\eij}q_{\eij} = 0~\forall \beta=1,\dots,b \}. \end{align*}  

\begin{figure}[htpb]
  \includegraphics[width=\columnwidth]{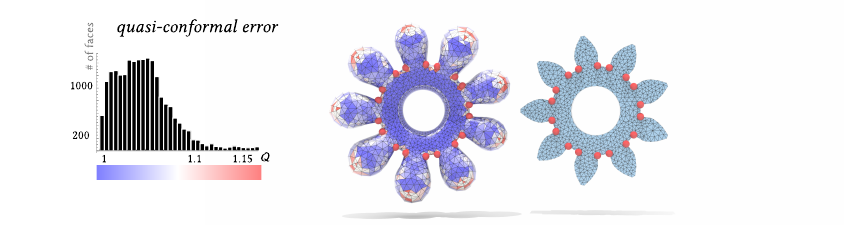}
  \centering
  \caption{Since the notion of discrete conformality is based on length cross ratios the quasi-conformal error is also low as indicated in the histogram (distribution quasi-conformal error \(\mathcal{Q}\) over the faces). Recall that \(\mathcal{Q}=1\) is optimal.}
  \label{fig:gear}
\end{figure}
\section{Numerics and Evaluation}
{To compute the surfaces numerically, we use the algorithm presented in~\cite{Soliman:2021:CWS} for solving conformal variational problems. The knot points are specified by constraints linear in the vertex positions, while the scale factor, conformal class, and boundary conditions are specified by constraints linear in the logarithmic edge lengths. The flux constraint is treated distinctly. Letting \(\nu\in\Omega^2(\Msf^*;\RR^3)\) be defined so that \(\nu_i=0\) for all non-constraint vertices we can formulate the flux constrained algorithm as finding zeros of \((\grad\Willmore - \dsf\mu - \nu, \Csf\log\ell)\) via Newton's method.
Table~\ref{table:Timings} presents the computation time and quasi-conformal error of several examples in the paper. The computation was terminated when the \(L^2\)-norm of the Euler-Lagrange equations was below a tolerance:  
\[\|\dsf\mu - \grad\Willmore\|_{L^2\Omega^2(\Msf^*)} < \varepsilon. \] 
The timings were measured on an Apple M1 Max (2021). We measure the quasi-conformal error on a typical example in \figref{gear}.}

\begin{table}[h]
  \centering
  \caption{Timings of examples in the paper.\label{table:Timings}}
  \begin{tabular}{ c|c|c|c|c|c } 
    & figure & \(|\Vsf|\) & \(|\Esf|\) & iterations & runtime (s) \\ \hline
    ornament & \ref{fig:Icosahedron Scale Constraints} & 30k & 90k & 16 & 14s \\\hline
    flux & \ref{fig:flux} & 35k & 100k & 42 & 55s \\\hline
    patch & \ref{fig:PatchSize} & 10k & 25k & 21 & 9s \\\hline
    giraffe & \ref{fig:Teaser} & 50k & 200k & 105 & 186s \\\hline
    gluing & \ref{fig:Degree 5 Gluing} & 25k & 75k & 18 & 13s \\\hline
    spot & \ref{fig:FixedMetric} & 15k & 35k & 8 & 6s \\\hline
    m{\"o}bius & \ref{fig:MobiusBand} & 5k & 20k & 34 & 17s \\\hline
    4-noid & \ref{fig:BryantSurfaces} & 60k & 200k & 18 & 38s \\\hline
  \end{tabular}
\end{table}

\section{Applications}
{We now consider applications of our boundary conditions and additional conformal parameters to geometric modeling, surface fairing, and mathematical visualization.}

\subsection{Surface Modeling}
\begin{figure}[h]
   \centering
   \includegraphics[width=\columnwidth]{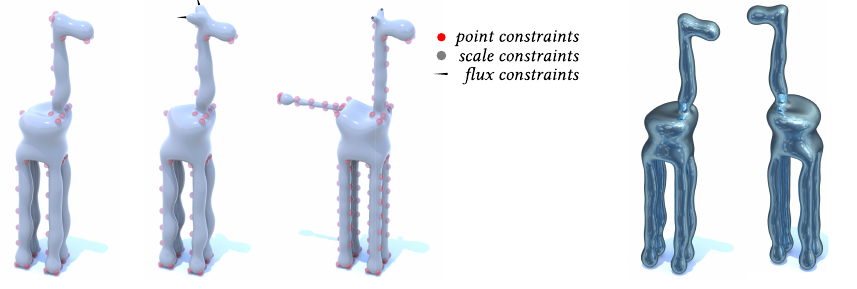}
   \caption{ {Surfaces modeled with conformal splines can be manipulated by specifying point positions, conformal scale factors, and fluxes. \figloc{Left:} A giraffe is modeled by manipulating point constraints on a sphere. \figloc{Middle:} The ears of the giraffe are deformed by changing the tangent planes. \figloc{Right:} Additional point constraints are used to add a tail and prevent the legs from bulging; the ears are instead designed by specifying a positive scale factor relative to the geometry of the right. Since they are Willmore minimizers, they exhibit smooth reflections. \label{fig:Teaser} }}
\end{figure}

Conformal surface splines provide a low dimensional subspace of the space of smooth surfaces interpolating a collection of point constraints. \figref{Teaser} was designed by manipulating point constraints, conformal knot points, and scale constraints. In the limit, the surfaces are smooth away from the point constraints, and they are at worst \(C^1\) at the constraint points. One of the main advantages of working with fully nonlinear splines pertains to how they are glued together. Splines that are based on tensor product representations can only be glued together in such a way that four patches meet at a point, whereas no such restriction exists when the surfaces are parameterized over abstract Riemann surfaces. One can simply glue together any two domains with consistent boundary conditions to get a discrete Riemann surface over which the new smooth surface seamlessly interpolates the constraint points (\figref{Degree 5 Gluing}).

\begin{figure}[!h]
  \includegraphics[width=\columnwidth]{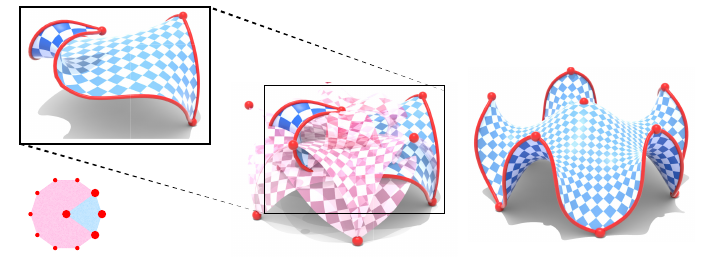}
  \centering% \section{Supplemental Material}
  \caption{{Gluing spline patches together at irregular degree vertices pose no problems for our method, which is formulated over abstract two-dimensional domains. We start with the blue patch (\figloc{left}), and minimize the Willmore energy in a fixed discrete conformal class with free boundary conditions. Duplicating and rotating the patch does not produce a smooth surface (\figloc{middle}), but this can be smoothed away after gluing the surface intrinsically (\figloc{right}).}}
  \label{fig:Degree 5 Gluing}
\end{figure}

\subsection{Surface Fairing}
Surface fairing is a central tool in the geometry processing toolkit, and is an application that is well handled by conformal scale constraints. 
Fixing the conformal scale factor to be zero in selected regions thereby enforces that there the deformations remain isometric. This formulation is better suited when the preservation of existing geometric features is desired (\figref{FixedMetric}).
This approach is complemented by existing conformal fairing algorithms: \cite{Crane:2013:RFC} evolves the conformal immersion by the gradient flow of a curvature energy for short time, and \cite{Soliman:2021:CWS} minimizes a weighted combination of the Willmore energy and a fidelity term.

\begin{figure}[htpb]
  \centering
  \includegraphics[width=\columnwidth]{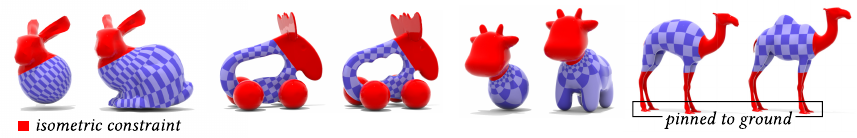}
  \caption{Preserving geometric features during fairing can be achieved by using vanishing conformal scale factor constraints and minimizing the Willmore energy to optimality. In the case of the camel, we also use point constraints to fix the bottom of the feet to the ground. 
  \label{fig:FixedMetric}}
\end{figure}

\subsection{Mathematical Visualization}
Constrained Willmore surfaces are differential geometric objects that can be illustrated and investigated through the numerical computation of their discrete counterparts. 
\paragraph*{Conformal M{\"o}bius Bands}
A natural differential geometric question is to determine the optimal geometric realization of a M{\"o}bius band in \(\RR^3\). 
The Willmore energy alone does not also provide a meaningful way to answer this question due to the following construction (though an infinite family of Willmore critical M{\"o}bius bands was recently constructed in~\cite{Dorfmeister:2024:EPH}). Boy's surface is a Willmore real projective plane. By taking a thin strip out of the surface we obtain a Willmore M{\"o}bius band, and by decreasing the width of this strip the area of the M{\"o}bius band converges to zero and consequentially the Willmore energy also converges to zero. Fixing the conformal class of the surface prevents this kind of degeneration and provides an optimal geometric realization in each conformal class (\figref{MobiusBand}). 
\begin{figure}[htpb]
  \centering
  \includegraphics[width=\columnwidth]{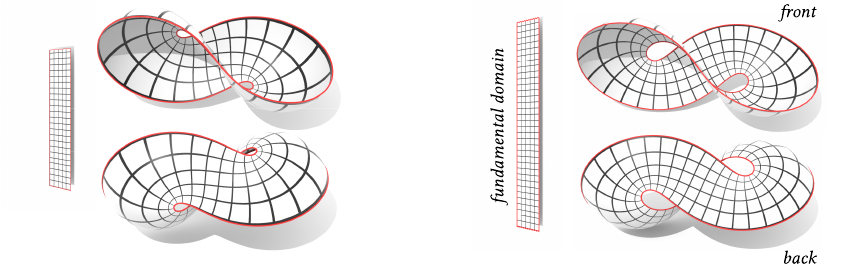}\caption{
    We compute Willmore minimizing M{\"o}bius bands in two conformal classes, described by identifying opposite sides of the flat rectangular strips (fundamental domain) illustrated above. Two views are visualized of the minimizing realizations of these conformal M{\"o}bius band in \(\RR^3\).
    \label{fig:MobiusBand}}
\end{figure}

\paragraph*{Bryant Surfaces}
Bryant surfaces are constant mean curvature surfaces in hyperbolic 3-space and provide an important family of examples of constrained Willmore surfaces in \(\RR^3\)~\cite{Bryant:1987:SMC,Bohle:2009:BSS}. 
\figref{BryantSurfaces} illustrates how by using the conformal scale factor constraints one can obtain discrete surfaces that closely resemble these smooth surfaces, and suggest that the Bryant surfaces may also be classified as constrained Willmore minimizers subject to point and scale constraints.

\begin{figure}[htpb]
  \centering
  \includegraphics[width=\columnwidth]{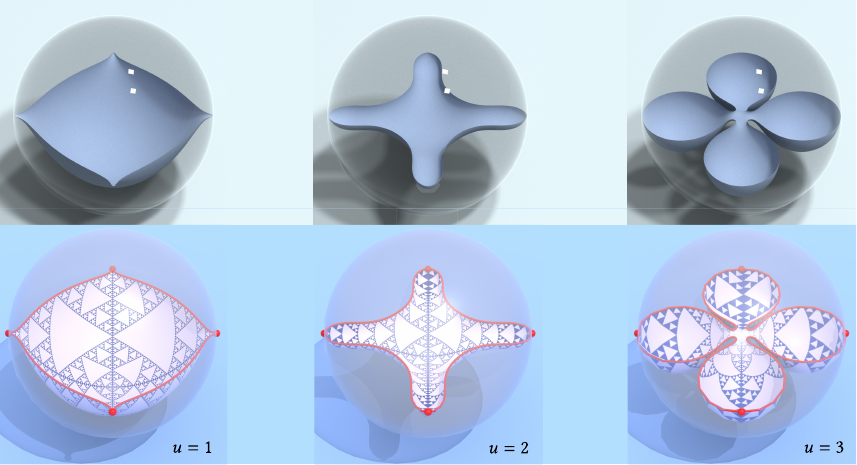}
  \caption{{\figloc{Top row:} visualizations of constant mean curvature 4-noids in the Poincar{\'e} ball model of hyperbolic 3-space from~\cite{Schmitt:GG}. \figloc{Bottom row:} discrete constrained Willmore surfaces with four point and four scale constraints (at the marked points) in \(\RR^3\). The scale factors are given relative to the round metric on \(S^2\). \label{fig:BryantSurfaces}}}
\end{figure}

\section{Discussion}
{We utilize constrained Willmore surfaces and their conformal properties (knot points, scale and flux constraints) for free-form surface modeling since they are specified by finitely many parameters. We also introduced free boundary conditions for conformal surface splines that do not require the specification of any extrinsic geometry. There are two primary limitations of the proposed framework that we aim to address in future work. The first is that drastic changes in the positions and conformal scale factor of constraint vertices typically lead to regions of the surface with high area distortion that will be poorly approximated at the original mesh resolution. This can be somewhat mitigated by the use of higher order (\eg, spherical) elements~\cite{Kilian:2023:MSF}, but we are also interested in exploring the possibility of M{\"o}bius geometric subdivision schemes related to the notion of constrained Willmore surfaces. The second limitation is that the iterations of the optimization require the solution of large linear systems making the interactive manipulation of the conformal splines still out of reach of existing methods. There is a relationship between Willmore surfaces and minimal surfaces in hyperbolic space~\cite{Richter:1997:CMR} that may provide a more efficient numerical method for computing conformal surface splines.}

\section*{Acknowledgements}
\noindent
This work was supported in part by the IST initiative at Caltech, the Einstein Foundation in Berlin, the Hausdorff Center at the University of Bonn, the Deutsche Forschungsgemeinschaft (DFG - German Research Foundation) - Project-ID 195170736 - TRR109 ``Discretization in Geometry and Dynamics,'' and Side Effects Software, Inc.

%-------------------------------------------------------------------------
% bibtex
\bibliographystyle{elsarticle-num} 
\bibliography{ConformalSurfaceSplines}     

\appendix

\section{DEC Leibniz Rule}
\label{app:DECLeibniz}
The conservation laws of the discrete Willmore can be derived in a straightforward manner once we define some new spaces of discrete differential forms. Define the spaces of discrete differential 0-forms and 2-forms supported on the edges
\[
  \Omega^0_{\Esf}(\Msf)\coloneqq\{\phi:\Esf\to\RR\},\qquad \Omega^2_{\Esf}(\Msf)\coloneqq\{\sigma:\Esf\to\RR\},
\]
respectively. Although elements of both are identified with unoriented values per edge they are distinct spaces (with \(\Omega^2_{\Esf}\) being identified with the dual space of \(\Omega^0_{\Esf}\)). 
The averaging operator (\eqref{averaging_operator}) can now be interpreted as a map \(\Asf:\Omega^0(\Msf)\to\Omega^0_{\Esf}(\Msf)\) and its adjoint \(\Asf:\Omega^2_{\Esf}(\Msf)\to\Omega^2(\Msf^*)\). There is the natural elementwise multiplication \(\Omega^0_{\Esf}(\Msf)\times\Omega^1(\Msf^*)\to\Omega^1(\Msf^*)\): for \(\phi\in\Omega^0_{\Esf}(\Msf)\) and \(\alpha\in\Omega^1(\Msf^*)\), \((\phi\alpha)_{\eij} = f_{\eij}\alpha_{\eij}\). We will also use the discrete wedge product \(\wedge:\Omega^1(\Msf)\times\Omega^1(\Msf^*)\to\Omega^2_{\Esf}(\Msf)\) between primal and dual 1-forms: for \(\alpha\in\Omega^1(\Msf)\) and \(\beta\in\Omega^1(\Msf^*)\), \((\alpha\wedge\beta)_{\eij} = \alpha_{\eij}\beta_{\eij}\). These spaces of differential forms fit together so that there is a product rule between primal 0-forms and dual 1-forms (or dually, dual 0-forms and primal 1-forms).

\begin{theorem}
  \label{thm:DECLeibniz}
  Let \(\phi\in\Omega^0(\Msf)\) and \(\alpha\in\Omega^1(\Msf^*)\) then \[ \dsf(\phi\alpha) = \phi\,\dsf\alpha + \Asf^*(\dsf \phi\wedge \alpha).\] 
\end{theorem}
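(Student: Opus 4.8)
The plan is to verify the identity edgewise, using the combinatorial definitions of the operators involved. Both sides live in $\Omega^1(\Msf^*)$, so it suffices to evaluate each side on a fixed dual edge $\eij \in \Esf$ (equivalently, to pair both sides with the dual 1-chain of that edge) and check equality. The dual edge dual to $\eij$ connects the two faces adjacent to $\eij$; call them $\eijk$ and $\jil$ (the second oriented consistently with the opposite orientation of the edge). First I would write out $\dsf(\phi\alpha)_{\eij}$: by definition of the exterior derivative on a dual 1-form, this is the signed sum of $(\phi\alpha)$ over the coboundary, i.e. it measures the difference of the dual-$0$-form values at the two endpoints of the dual edge — but since $\phi$ is a \emph{primal} $0$-form and $\alpha$ a dual $1$-form, $\phi\alpha$ is itself a dual $1$-form (using the elementwise multiplication $\Omega^0_\Esf \times \Omega^1(\Msf^*) \to \Omega^1(\Msf^*)$ after restricting $\phi$ to the edge in the obvious way, $\phi_{\eij}$ being shorthand for the averaged value or, in the way the theorem is stated, $\phi\alpha$ just means the dual $1$-form $\eij \mapsto \phi_{\eij}\alpha_{\eij}$ where $\phi_{\eij}$ is a value associated to the edge). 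The cleanest route is to unwind $\dsf$ acting on the dual $1$-form $\phi\alpha$ in terms of its action on $2$-cells of $\Msf^*$, i.e. vertices of $\Msf$.

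Concretely, for an interior vertex $i$, $(\dsf\beta)_i = \sum_{\eij} \beta_{\eij}$ for a dual $1$-form $\beta$ (with the orientation convention that halfedges point away from $i$), so I would compute
\[
  \bigl(\dsf(\phi\alpha)\bigr)_i \;=\; \sum_{\eij} \phi_{\eij}\,\alpha_{\eij},
\]
and then split $\phi_{\eij}$. Here the key algebraic move is that $\phi$ is genuinely a vertex quantity and "$\phi\alpha$" must be interpreted via some rule attaching a scalar to each edge; reading the appendix, the natural reading compatible with the product $\Omega^0_\Esf(\Msf) \times \Omega^1(\Msf^*) \to \Omega^1(\Msf^*)$ together with the averaging $\Asf: \Omega^0(\Msf) \to \Omega^0_\Esf(\Msf)$ is that $\phi_{\eij} = \tfrac12(\phi_i + \phi_j)$. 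Then
\[
  \phi_{\eij} = \phi_i + \tfrac12(\phi_j - \phi_i) = \phi_i + \tfrac12(\dsf\phi)_{\eij},
\]
where $\dsf\phi$ is the primal $1$-form $(\dsf\phi)_{\eij} = \phi_j - \phi_i$ (oriented from $i$). Substituting,
\[
  \bigl(\dsf(\phi\alpha)\bigr)_i = \phi_i\sum_{\eij}\alpha_{\eij} + \tfrac12\sum_{\eij}(\dsf\phi)_{\eij}\,\alpha_{\eij} = \phi_i\,(\dsf\alpha)_i + \tfrac12\sum_{\eij}(\dsf\phi\wedge\alpha)_{\eij},
\]
recognizing the first term as $(\phi\,\dsf\alpha)_i$ and the second sum, by the definition of the wedge $\wedge: \Omega^1(\Msf)\times\Omega^1(\Msf^*) \to \Omega^2_\Esf(\Msf)$, as $\tfrac12\sum_{\eij}(\dsf\phi\wedge\alpha)_{\eij}$. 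Finally I would check that this last quantity equals $\bigl(\Asf^*(\dsf\phi\wedge\alpha)\bigr)_i$: the adjoint averaging $\Asf^*: \Omega^2_\Esf(\Msf)\to\Omega^2(\Msf^*)$ sends a per-edge $2$-form $\omega$ to the dual $2$-form whose value at vertex $i$ is $\tfrac12\sum_{\eij}\omega_{\eij}$ (each edge contributes half to each of its two endpoints), which is exactly what appears. Assembling, $\dsf(\phi\alpha) = \phi\,\dsf\alpha + \Asf^*(\dsf\phi\wedge\alpha)$.

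The only genuinely delicate point — and the step I expect to be the main obstacle in writing this up cleanly rather than in the mathematics itself — is pinning down all the sign and orientation conventions so that the coboundary sums, the definition of $\dsf$ on primal versus dual forms, and the identification $\Esf^*\cong\Esf$ are mutually consistent; once "$\phi\alpha$ at an edge means $\tfrac12(\phi_i+\phi_j)\alpha_{\eij}$" and "$\Asf^*$ distributes a per-edge $2$-form half-and-half to its endpoint dual $2$-cells" are fixed, the computation is a one-line algebraic rearrangement $\tfrac12(\phi_i+\phi_j) = \phi_i + \tfrac12(\phi_j-\phi_i)$ summed over the star of $i$. The dual statement (dual $0$-forms against primal $1$-forms) follows by the same argument with the roles of $\Msf$ and $\Msf^*$ exchanged, or simply by taking adjoints, so I would remark on it in one sentence rather than repeat the computation.
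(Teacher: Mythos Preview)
Your proof is correct and follows exactly the paper's approach: evaluate at a vertex \(i\), split \(\phi_{\eij}=\tfrac12(\phi_i+\phi_j)=\phi_i+\tfrac12(\dsf\phi)_{\eij}\), and identify the two resulting sums as \(\phi_i(\dsf\alpha)_i\) and \((\Asf^*(\dsf\phi\wedge\alpha))_i\). One minor slip to clean up: both sides of the identity live in \(\Omega^2(\Msf^*)\), not \(\Omega^1(\Msf^*)\) --- which is why the correct check is vertexwise (dual \(2\)-cells), as you in fact do once you get going.
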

\begin{proof}
  For a vertex \(i\in\Vsf\) we have \begin{align*}
    \dsf(\phi\alpha)_{i} & = \sum_{\eij}\frac{\phi_i + \phi_j}{2}\alpha_{ij} = \sum_{\eij}(\phi_i + \frac12\dsf\phi_{\eij})\alpha_{ij} \\
    & = \phi_i\sum_{\eij}\alpha_{ij} + \frac12\sum_{\eij}\dsf\phi_{\eij}\alpha_{\eij} = \phi_i (\dsf\alpha)_i + (\Asf^*(\dsf \phi\wedge\alpha))_i 
  \end{align*}
\end{proof}

\end{document}